\newcommand{\N}{\mathbb N}
\newcommand{\Z}{\mathbb Z}
\newcommand{\C}{\mathbb C}
\newcommand{\gen}[1]{{\langle #1 \rangle}}
\newcommand{\inv}[1]{{#1^{-1}}}
\newcommand{\GL}{\mathrm{GL}}
\newcommand{\PGL}{\mathrm{PGL}}
\newcommand{\Irr}{\mathrm{Irr}}
\newcommand{\Cl}{\mathrm{Cl}}
\newcommand{\res}[2]{{#1}\hspace{-.75ex}\downharpoonright_{#2}}
\newcommand{\ind}[2]{{#1}\hspace{-.75ex}\upharpoonright^{#2}}
\newcommand{\IG}[3]{{#1}_{#2,#3}}
\renewcommand{\inf}[2]{{#1\!~\!}^{\flat_{#2}}}
\newcommand{\cg}[3]{{#2}\hspace{-.50ex}\mid_{#1}^{#3}}
\newcommand{\mynewtheorem}[2]{
  \newaliascnt{#1}{dummy}
  \newtheorem{#1}[#1]{#2}
  \aliascntresetthe{#1}
  \expandafter\def\csname #1autorefname\endcsname{#2}
}
\newtheorem*{thm*}{Theorem}
\theoremstyle{plain}
\theoremstyle{definition}
\theoremstyle{remark}
\title{A note on the projective representations of $p$-solvable and $\pi$-separable groups}
\author{Mariagrazia Bianchi and Nicola Sambonet}
\begin{document}

\maketitle

\begin{abstract}
Following Gluck and Wolf we complete the Itô--Michler's Theorem for the
projective representations of a $p$-solvable or $\pi$-separable group, and then we relate the projective irreducible modules of such a group with those of its Sylow $p$-subroups and Hall $\pi$-subgroups.
\end{abstract}

\section{Introduction}
In representation theory of finite groups, a main task is to describe how the $p$-sugroups control the $p$-part of the degrees of the irreducible modules.
In this direction, the most important results are the Itô--Michler's Teorem, the McKay's Conjecture, and the Brauer's Height Zero Conjecture, both conjectures are now proved \cite{CabanesSpath,MalleNavarroSchaefferFryTiep,Michler}.
Among the three results, the McKay's Conjecture is of different nature, counting the irreducible modules of $p'$-degree.
Indeed the Height Zero Conjecture, although it is about the interaction of ordinary and modular representations, is closer to the Itô--Michler's Theorem, and in fact its proof relies on a projective version of the theorem, proved by D.\!~Gluck and T.\!~R.\!~Wolf for solvable and $p$-solvable groups, and by G.\!~Navarro and P.\!~H.\!~Tiep in full generality \cite{GluckWolf,GluckWolfpSol,Wolf,NavarroTiep}.
Hereby $p$ is a prime,  $G$ is a finite group, $P$ is a Sylow $p$-subgroup of $G$, and $c$ is a coclass in the Schur multiplier $H^2G$.
Thus our starting point are the following results.
\begin{thm*}[Itô--Michler]\label{Satz:Itô Michler}
The prime $p$ does not divide the irreducible degrees of $G$ if and only if $P$ is normal abelian.
\end{thm*}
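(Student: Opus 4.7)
The plan is to treat the two implications separately. Sufficiency is a direct application of Itô's theorem on normal abelian subgroups: if $P \trianglelefteq G$ is abelian, then $\chi(1) \mid [G:P]$ for every $\chi \in \Irr(G)$, and $[G:P]$ is a $p'$-number. The real content is necessity, which splits naturally into proving that $P$ is normal and, once normality is available, proving that $P$ is abelian.

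Assume $p \nmid \chi(1)$ for every $\chi \in \Irr(G)$. I would first argue that $P \trianglelefteq G$ by induction on $|G|$. The strategy is to take a minimal normal subgroup $M$ of $G$ with $p \mid |M|$ and to show that it must be elementary abelian. Writing $M = S_1 \times \cdots \times S_k$ with the $S_i$ isomorphic simple groups, if the $S_i$ were non-abelian then --- invoking the classification of finite simple groups --- each $S_i$ would admit $\theta_i \in \Irr(S_i)$ with $p \mid \theta_i(1)$; the external product $\theta := \theta_1 \otimes \cdots \otimes \theta_k \in \Irr(M)$ would then have $p \mid \theta(1)$, and Clifford theory would produce $\psi \in \Irr(\mathrm{Stab}_G(\theta) \mid \theta)$ with $\theta(1) \mid \psi(1)$, so $\chi := \psi^G \in \Irr(G)$ would contradict the hypothesis. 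Hence $M$ is elementary abelian $p$. Since characters of $G/M$ lift to $G$ with the same degree, $G/M$ inherits the hypothesis; by induction $P/M \trianglelefteq G/M$, whence $P \trianglelefteq G$.

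With $P$ normal, the same Clifford machinery runs in reverse to force $P$ abelian: for $\theta \in \Irr(P)$ with inertia group $T$ and $\psi \in \Irr(T \mid \theta)$, the induced character $\chi := \psi^G \in \Irr(G)$ satisfies $\theta(1) \mid \chi(1)$; as $\chi(1)$ is a $p'$-number while $\theta(1)$ is a power of $p$, we must have $\theta(1) = 1$. Every irreducible character of $P$ is linear, so $P$ is abelian. The main obstacle is the classification-dependent statement that every non-abelian finite simple group has an irreducible character of degree divisible by each prime dividing its order --- this is Michler's contribution and the only ingredient not of a purely character-theoretic nature.
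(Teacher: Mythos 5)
The paper does not prove this statement at all: it is quoted in the introduction as classical background (the Itô--Michler Theorem, with the CFSG-dependent direction credited to Michler and cited to the literature), so there is no in-paper argument to compare yours against. Your outline follows the standard proof from the literature: Itô's theorem on normal abelian subgroups for sufficiency, induction plus Michler's simple-group result for normality, and the Clifford divisibility $\theta(1)\mid\chi(1)$ for abelianity. The sufficiency direction, the abelianity step, and the identification of the classification-dependent ingredient are all correct.

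There is, however, a genuine gap in the induction establishing $P\trianglelefteq G$. You begin by taking a minimal normal subgroup $M$ with $p\mid |M|$, but such an $M$ need not exist a priori: the case $O_p(G)=1$, where \emph{every} minimal normal subgroup is a $p'$-group, is exactly the hard case and is not addressed. (Your concluding step ``$P/M\trianglelefteq G/M$, whence $P\trianglelefteq G$'' also silently uses $M\leq P$, which only holds once $M$ is known to be a $p$-group.) In the missing case one takes a minimal normal $p'$-group $M$, gets $PM\trianglelefteq G$ from the quotient, and must split further: if $PM<G$ one applies induction to $PM$ --- which requires observing that the hypothesis is inherited by normal subgroups via $\theta(1)\mid\chi(1)$ --- and concludes since $P$ is then characteristic in $PM$; if $G=PM$ one needs a different argument, namely that each $P$-orbit on $\Irr(M)$ has size $[G:\mathrm{Stab}_G(\theta)]$ dividing some character degree of $G$, hence is trivial, and then that a coprime action fixing every irreducible character of $M$ (equivalently, by Brauer's permutation lemma, every conjugacy class) must centralize $M$, giving $G=P\times M$. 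That coprime-action lemma is a substantive ingredient, on the same footing as the Clifford facts you do invoke, and the proof is incomplete without it.
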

\begin{thm*}[Gluck--Wolf, Navarro--Tiep]\label{Satz: Itô Michler projective}
If the prime $p$ does not divide the irreducible $c$-degrees of $G$, then $P$ is abelian.
\end{thm*}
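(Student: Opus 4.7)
\bigskip
\noindent\textbf{Proof plan.}
The plan is to reduce the projective statement to an ordinary character-theoretic one via a central extension, and then run an Itô--Michler type argument on that extension.

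First, I would choose a finite group $\tilde G$ together with a central cyclic subgroup $Z\leq Z(\tilde G)$ and a faithful linear character $\lambda\colon Z\to\C^\times$ whose associated class in $H^2G$ equals the coclass $c$; concretely, $\tilde G$ can be taken to be a Schur representation group of $G$ and $Z$ a suitable cyclic subgroup of the Schur multiplier.  The classical projective/ordinary dictionary then yields a bijection between the irreducible projective $c$-representations of $G$ and the ordinary irreducible characters of $\tilde G$ lying above $\lambda$, preserving degrees.  Hence the hypothesis translates to: every $\chi\in\Irr(\tilde G\mid\lambda)$ satisfies $p\nmid\chi(1)$.

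Second, I would compare Sylow $p$-subgroups.  A Sylow $p$-subgroup $\tilde P$ of $\tilde G$ fits into a central extension $1\to Z_p\to\tilde P\to P\to 1$, where $Z_p$ is the $p$-part of $Z$.  Since $Z_p$ is central in $\tilde P$, the group $\tilde P$ is abelian if and only if $P$ is, so it suffices to prove that $\tilde P$ is abelian under the above $\lambda$-covering hypothesis.

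Third, I would argue by induction on $|\tilde G|$, following the skeleton of the ordinary Itô--Michler argument but carrying the auxiliary character $\lambda$ through each reduction step.  Take a minimal normal subgroup $N\trianglelefteq \tilde G$; since $G$, and hence $\tilde G$, is $p$-solvable, $N$ is an elementary abelian $q$-group for some prime $q$.  When $q\ne p$, a Clifford/Fong--Reynolds style reduction relative to $\lambda$ lets me replace $\tilde G$ by an inertia subgroup and close the induction.  When $q=p$, so that $N\leq O_p(\tilde G)$, I would argue contrapositively: if $\tilde P$ were nonabelian, then standard orbit theorems on coprime $p'$-actions on $p$-groups (the regular-orbit inputs used by Gluck--Wolf) would produce a character $\chi\in\Irr(\tilde G\mid\lambda)$ of $p$-divisible degree, contradicting the hypothesis.

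The main obstacle is keeping tight control of the central character $\lambda$ throughout the induction: ordinary Itô--Michler is free to quotient out or restrict at will, whereas here each reduction must preserve both the faithfulness of $\lambda$ on its (possibly shifted) central witness and the cohomology class $c$ that $\lambda$ encodes.  For the fully general, non-$p$-solvable case, this solvable skeleton is not enough, and the Navarro--Tiep argument must invoke the classification of finite simple groups to bound the $p$-parts of projective character degrees of quasi-simple groups of Lie type.
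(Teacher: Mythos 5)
You should first be aware that the paper does not prove this statement at all: it is quoted as an external black box, attributed to Gluck--Wolf (for $p$-solvable groups) and Navarro--Tiep (in general), and every subsequent use of it in the paper is by citation. So there is no internal argument to compare yours against; the only fair comparison is with the cited literature, whose opening move your plan does reproduce correctly (pass to a representation group $\tilde G$ with central $Z$ and faithful $\lambda$ encoding $c$, and translate the hypothesis into ``every $\chi\in\Irr(\tilde G\mid\lambda)$ has $p'$-degree'').

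As a proof, however, the proposal has a genuine gap: all of the mathematical content is deferred. The two steps that carry the entire weight --- (a) in the $q=p$ case, showing that a nonabelian Sylow $p$-subgroup forces the existence of some $\chi\in\Irr(\tilde G\mid\lambda)$ of degree divisible by $p$ via ``standard orbit theorems on coprime actions'', and (b) the non-$p$-solvable case, where one must control projective character degrees of quasi-simple groups using the classification --- are precisely the theorems of Gluck--Wolf and Navarro--Tiep being asserted, not proved. Your induction step also silently assumes $G$ is $p$-solvable (``since $G$, and hence $\tilde G$, is $p$-solvable''), which is not a hypothesis of the statement, so even as a roadmap it covers only the Gluck--Wolf case; you acknowledge this at the end, but it should be flagged as a restriction of scope rather than an ``obstacle''. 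Finally, one small inaccuracy: for a central extension $1\to Z_p\to\tilde P\to P\to 1$ it is false that $\tilde P$ is abelian if and only if $P$ is (extraspecial groups are counterexamples); only the implication you actually use ($\tilde P$ abelian $\Rightarrow$ $P$ abelian) holds. In fact the issue evaporates for a cleaner reason available in this paper: since $o(c)$ divides every irreducible $c$-degree (\autoref{Satz: o(c), dim V and oG}), the hypothesis forces $o(c)$ to be a $p'$-number, hence $Z_p=1$ and $\tilde P\cong P$. I would state that explicitly rather than the incorrect equivalence.
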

Our initial motivation was to complete this result by listing equivalent conditions, and in turn for $p$-solvable groups this task is achieved easily by following Gluck and Wolf (\autoref{Satz: Itô Michler projective p-sol}).
\begin{thm}\label{Intro:proj IM p-sol}
Let $G$ be $p$-solvable. The prime $p$ does not divide the irreducible $c$-degrees of $G$ if and only if $P$ is abelian, the restriction of $c$ to $P$ is trivial, and all the irreducible $c$-modules of $O_{p'}(G)$ are $P$-invariant.
Moreover, the last condition is equivalent to say that all the $c$-regular conjugacy classes of $O_{p'}(G)$ are $P$-invariant.
\end{thm}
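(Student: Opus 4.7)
I would prove the main equivalence and the addendum in sequence, following Gluck--Wolf's projective Clifford method on top of the Gluck--Wolf--Navarro--Tiep theorem.

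\emph{Only if.} By the Gluck--Wolf--Navarro--Tiep theorem, $P$ is abelian. Since $P$ is abelian, every $c$-irreducible of $P$ has the common degree $\sqrt{[P:Z(c|_P)]}$, a power of $p$ equal to $1$ precisely when $c|_P=1$; as each $\chi|_P$ decomposes into such constituents, the assumption that $p\nmid\chi(1)$ for every $\chi\in\Irr^c(G)$ forces $c|_P=1$. Writing $N:=O_{p'}(G)$, if some $c$-irreducible $\theta$ of $N$ were not $P$-invariant, its $P$-orbit would have $p$-power size greater than one, so $[G:I_G(\theta)]$ would be divisible by $p$; then the projective Clifford formula $\chi(1)=[G:I_G(\theta)]\,\psi(1)$ for any $\chi$ above $\theta$ would contradict the hypothesis.

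\emph{If.} I would argue by induction on $|G|$. Take $\chi\in\Irr^c(G)$ and a constituent $\theta$ of $\chi|_N$; by (iii) the inertia group $T:=I_G(\theta)$ contains $NP$, so $p\nmid[G:T]$. When $T<G$, the projective Clifford correspondence reduces the problem to the pair $(T,c|_T)$, for which the three hypotheses are inherited ($P$ remains the Sylow $p$-subgroup of $T$, $c|_P$ is still trivial, and after a further Clifford step the $c$-irreducibles of $O_{p'}(T)$ remain $P$-invariant), and induction applies. When $T=G$, $\theta$ is $G$-invariant and $\theta(1)$ is coprime to $p$ since it divides $|N|$; passing to a central extension that trivialises $c|_N$, the $c$-irreducibles of $G$ above $\theta$ are parametrised by the twisted irreducibles of $G/N$ for an inflated cocycle $c'$ whose restriction to $PN/N\cong P$ is trivial by (ii). Since $G/N$ is $p$-solvable with $O_{p'}(G/N)=1$ and trivially satisfies (iii), the inductive hypothesis applied to $(G/N,c')$ gives $p\nmid\chi(1)/\theta(1)$, so $p\nmid\chi(1)$.

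\emph{Addendum and main obstacle.} A Brauer-type permutation lemma for projective characters shows that the $P$-actions on the set $\Irr^c(N)$ and on the set of $c$-regular conjugacy classes of $N$ have the same number of fixed points; as these two sets are equinumerous, all $c$-irreducibles of $N$ are $P$-invariant iff all $c$-regular classes of $N$ are. The main obstacle I anticipate is the bookkeeping in the $T=G$ case above: one must carefully descend the cocycle through a central extension to $G/N$, check that $PN/N$ carries the full $p$-part of $|G/N|$, and verify that the three conditions transfer to the quotient---this is exactly where the $p$-solvability of $G$ is genuinely used, together with the base case $O_{p'}(G)=1$.
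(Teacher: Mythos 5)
Your architecture (Gluck--Wolf--Navarro--Tiep for abelianity of $P$, projective Clifford theory over $N=O_{p'}(G)$, a Brauer-type permutation argument for the addendum) matches the paper's, but there is a genuine missing ingredient: the Hall--Higman lemma, which is the pivot of the paper's proof and which you never invoke. The paper first uses Gluck--Wolf to get $P$ abelian, then Hall--Higman (\autoref{Satz: Hall Higman}) to conclude $G=O_{p'pp'}(G)$, i.e.\ $NP=O_{p'p}(G)\trianglelefteq G$, and then runs a \emph{two-step} Clifford analysis along the fixed series $1\le N\le NP\le G$ (\autoref{Satz: Itô Michler projective pi-sep}), where the top quotient $G/NP$ is a $p'$-group so no further invariance hypotheses are needed. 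Without the normality of $NP$ two of your steps do not close. In the ``only if'' direction, your implication ``$\theta$ not $P$-invariant $\Rightarrow p\mid [G:I_G(\theta)]$'' is not automatic: $p\nmid[G:I_G(\theta)]$ only guarantees that $I_G(\theta)$ contains \emph{some} Sylow $p$-subgroup $Q$, not $P$ itself; one needs $NQ=NP\trianglelefteq G$ to upgrade this to $P\le NQ\le I_G(\theta)$.

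The more serious issue is in the ``if'' direction. In the case $T=I_G(\theta)<G$ you assert that the three hypotheses are inherited by $T$, in particular that the irreducible $c$-modules of $O_{p'}(T)$ are $P$-invariant ``after a further Clifford step''. But $O_{p'}(T)$ can be strictly larger than $N$, and the hypothesis says nothing about its irreducible $c$-modules (knowing that each lies over a $P$-invariant module of $N$ does not make it $P$-invariant), so the induction does not visibly close. The repair is again Hall--Higman: since $NP\trianglelefteq G$ and $NP\le T$, the quotient $T/N$ has the \emph{normal} abelian Sylow $p$-subgroup $PN/N$ with trivial restricted coclass, so the Clifford correspondent $W\in\Irr(T/N|b)$ automatically has $p'$-degree and no inductive hypothesis on $O_{p'}(T)$ is required --- at which point the induction on $|G|$ dissolves into the paper's direct two-step argument. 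Your treatment of $c|_P=1$ and of the addendum (the two $P$-actions on $\Irr(N|c)$ and on the $c$-regular classes are both controlled by the action on $Z(\mathbb{C}^cN)$; the paper phrases this as $C_P(Z(\mathbb{C}^cN))=P$) is fine. So the proposal is salvageable, but as written it has a gap exactly where $p$-solvability must enter through Hall--Higman, and your closing remark correctly identifies the danger zone without actually resolving it.
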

As the condition on $O_{p'}(G)$ suggests, this result cannot be generalized to all finite groups,  and readily the alternating group $A_5$ is a counterexample.
On the other hand, this is a true generalization of the Itô--Michler's Theorem for $p$-solvable groups (\autoref{Satz: pSol all classes are c-regular}), and it extends to the $\pi$-separable groups for any set $\pi$ of primes as follows (\autoref{Satz: pi-sep abelian factors and Sylow subgroups}).
\begin{thm}\label{Intro:proj IM pi-sep}
Let $G$ be $\pi$-separable.
Then every irreducible $c$-module of $G$ has $\pi'$-degree if and only if $G$ is $p$-solvable and satisfies the conditions of \autoref{Intro:proj IM p-sol} for all primes $p$ in $\pi$.
In this case every $\pi$-factor of $G$ is abelian, and $c$ restricts trivially to the Hall $\pi$-subgroups of $G$.
\end{thm}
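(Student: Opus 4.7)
The plan is to derive Theorem~\ref{Intro:proj IM pi-sep} as the union, over the primes $p\in\pi$, of Theorem~\ref{Intro:proj IM p-sol} applied independently at each $p$.

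The backward implication is immediate: if $G$ is $p$-solvable and satisfies the conditions of Theorem~\ref{Intro:proj IM p-sol} for every $p\in\pi$, then no prime of $\pi$ divides any irreducible $c$-degree, so every irreducible $c$-module has $\pi'$-degree.

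For the forward implication, assume every irreducible $c$-module has $\pi'$-degree, whence no prime of $\pi$ divides any $c$-degree. The main task is to establish $p$-solvability of $G$ for each $p\in\pi$. Given $\pi$-separability, $G$ admits a normal series whose factors are $\pi$- or $\pi'$-groups; since the $\pi'$-factors are already $p'$-groups for each $p\in\pi$, this reduces to showing that each $\pi$-chief factor is solvable. Invoking Theorem~\ref{Satz: Itô Michler projective} yields abelian Sylow $p$-subgroups for every $p\in\pi$, and I would then rule out non-abelian simple composition factors inside a $\pi$-chief factor via a Clifford-theoretic argument on the $c$-modules of $G$ restricted to the chief factor, exploiting the fact that a non-abelian simple $\pi$-group with abelian Sylow $p$-subgroups nevertheless supports irreducible projective modules of degree divisible by some prime of $\pi$. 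With $p$-solvability in place, Theorem~\ref{Intro:proj IM p-sol} delivers the three conditions (abelian Sylow $p$-subgroup, trivial $c|_P$, and $P$-invariance of the $c$-modules of $O_{p'}(G)$) for every $p\in\pi$.

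The closing structural claims then follow cleanly. Since $G$ is $p$-solvable for each $p\in\pi$, every chief factor of $G$ is a $p$- or $p'$-group; a $\pi$-chief factor $V$ is a $\pi$-group that is a $p$- or $p'$-group for every $p\in\pi$, so $V$ is elementary abelian for some $p\in\pi$ and in particular abelian. For a Hall $\pi$-subgroup $H$, cohomology with $\C^\times$ coefficients is detected on the Sylow subgroups of $H$, and by Theorem~\ref{Intro:proj IM p-sol} the class $c$ restricts trivially to each Sylow of $H$; therefore $c|_H$ is trivial. The principal obstacle in this programme is the $p$-solvability step --- ruling out non-abelian simple $\pi$-groups with abelian Sylows as composition factors inside a $\pi$-chief factor --- while the remainder is prime-by-prime bookkeeping on top of the solvable case.
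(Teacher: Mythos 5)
Your backward implication and the final reduction to a prime-by-prime application of \autoref{Intro:proj IM p-sol} match the paper, but the forward direction has a genuine gap exactly where you flag ``the principal obstacle'': you never actually rule out a non-abelian $\pi$-chief factor, you only describe the intention to do so. The paper closes this with \autoref{Satz: M/N pi-abelian}, a short Clifford-theoretic computation that needs neither abelian Sylow subgroups nor any fact about simple groups: for a $\pi$-factor $M/N$ and any $V\in\Irr(N|c)$, write $\Irr(G|c,V)=\ind{(Y\otimes\Irr(J/N|b))}{^cG}$ with $J=\IG{G}{c}{V}$; the degree formula forces $\Pi(|G:J|)\subseteq\pi'$, hence $M\leq J$ because $|M:M\cap J|$ divides both the $\pi$-number $|M/N|$ and the $\pi'$-number $|G:J|$; then $\Pi(\Irr(M/N|\res{b}{M/N}))\subseteq\Pi(M/N)\cap\Pi(\Irr(J/N|b))=\varnothing$, which forces $\res{b}{M/N}=1$ and $M/N$ abelian. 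Note that your preliminary appeal to the full Gluck--Wolf/Navarro--Tiep theorem to obtain abelian Sylow $p$-subgroups does not advance this step: $A_5$ has abelian Sylow subgroups for every prime, so abelian Sylows cannot by themselves exclude non-abelian simple $\pi$-sections. The entire content is the transfer of the degree hypothesis from $G$ down to the normal section, which is precisely the argument you leave unexecuted.

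A secondary flaw: you derive the closing claim ``every $\pi$-factor of $G$ is abelian'' from the observation that the $\pi$-chief factors are elementary abelian. That is strictly weaker than the claim, since a $\pi$-factor in the paper's sense is any section $M/N$ with $N,M\trianglelefteq G$ and $\Pi(M/N)\subseteq\pi$, and a group all of whose chief factors are abelian need not be abelian (any non-abelian $p$-group). The full claim again requires \autoref{Satz: M/N pi-abelian}, i.e.\ the degree hypothesis, and does not follow from $p$-solvability for all $p\in\pi$ alone. Your argument that $\res{c}{H}=1$, by detecting $H^2H$ on the Sylow subgroups of the Hall $\pi$-subgroup $H$, is correct and is essentially \autoref{Satz: pic piIrrGc piG}.
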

%
In these variations on the Itô--Michler's Theorem, to consider all the irreducible $c$-modules at once is fundamental since in general we do not know how to separate the $\pi$-and $\pi'$-part of a single $c$-module $V$ in any reasonable way.
Remarkably this task is possible when $G$ is $\pi$-separable and $V$ is primitive, essentially by an argument of Schur (\autoref{Satz: n-decomposition}).
Since $H^2G$ is a finite abelian group, we can decompose
a coclass as a product $c=c_\pi c_{\pi'}$ where the order of $c_\pi$ and
$c_{\pi'}$ are $\pi$-and $\pi'$-numbers, and we  have the following result.
\begin{thm}
Let $G$ be $\pi$-separable, with Hall $\pi$-and $\pi'$-subgroups $H$ and $H'$.
Let $V$ be a primitive irreducible $c$-module of $G$.
Then $V=V_{\pi}\otimes V_{\pi'}$ for some $c_\pi$-and $c_{\pi'}$-modules $V_\pi$
and $V_{\pi'}$, so that $\res{V_\pi}{H}$ and $\res{V_{\pi'}}{H'}$ are
irreducible.
In particular $(\dim V)_\pi=\dim V_\pi$ and
$(\dim V)_{\pi'}=\dim V_{\pi'}$.
Moreover, since
$\res{c_\pi}{H'}=1$ and $\res{c_{\pi'}}{H}=1$, then $\res{V_{\pi}}{H'}$ and $\res{V_{\pi'}}{H}$ are ordinary modules.
\end{thm}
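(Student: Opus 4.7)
The plan is to pass to a suitable central extension where $V$ becomes an ordinary primitive representation, apply the n-decomposition of \autoref{Satz: n-decomposition}, and then descend back to projective representations of $G$.

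First I would realise $V$ as an ordinary irreducible representation $\tilde V$ of a finite central extension $1\to Z\to\tilde G\to G\to 1$ carrying the class $c$, on which $Z$ acts by a linear character $\lambda$. Since $|c_\pi|$ and $|c_{\pi'}|$ are coprime, I would arrange $Z=Z_\pi\times Z_{\pi'}$ with $\lambda=\lambda_\pi\lambda_{\pi'}$, so that the quotients $\tilde G/Z_{\pi'}$ and $\tilde G/Z_\pi$ realise $c_\pi$ and $c_{\pi'}$ respectively. The group $\tilde G$ inherits $\pi$-separability from $G$, and primitivity of $V$ as a projective module translates to primitivity of $\tilde V$ as an ordinary one.

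Next I would apply the n-decomposition to $\tilde V$: a primitive irreducible module of the $\pi$-separable group $\tilde G$ factors as $\tilde V=\tilde V_\pi\otimes\tilde V_{\pi'}$ with $\tilde V_\pi$ a $\pi$-special factor and $\tilde V_{\pi'}$ a $\pi'$-special factor. Since every central character of a $\pi$-special module has $\pi$-order, the $\pi'$-subgroup $Z_{\pi'}$ must act trivially on $\tilde V_\pi$; hence $\tilde V_\pi$ descends through $\tilde G/Z_{\pi'}$ to a $c_\pi$-module $V_\pi$ of $G$, and symmetrically we obtain $V_{\pi'}$, giving $V=V_\pi\otimes V_{\pi'}$. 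The defining property of the $\pi$-special factor --- irreducible restriction to a Hall $\pi$-subgroup $\tilde H$ of $\tilde G$ --- pulls down through $\tilde H\onto H$ to give that $\res{V_\pi}{H}$ is irreducible, and likewise $\res{V_{\pi'}}{H'}$.

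For the "moreover" clause, the Schur multiplier of a finite $\pi$-group is itself a $\pi$-group, so $\res{c_{\pi'}}{H}$ has order both a $\pi$- and a $\pi'$-number and is therefore trivial; dually $\res{c_\pi}{H'}=1$. Hence $\res{V_{\pi'}}{H}$ and $\res{V_\pi}{H'}$ are ordinary modules. Since $\res{V_\pi}{H}$ is an irreducible projective representation of the $\pi$-group $H$, its dimension $\dim V_\pi$ divides $|H|$ and so is a $\pi$-number; dually $\dim V_{\pi'}$ is a $\pi'$-number, and from $\dim V=\dim V_\pi\cdot\dim V_{\pi'}$ one reads off $(\dim V)_\pi=\dim V_\pi$ and $(\dim V)_{\pi'}=\dim V_{\pi'}$. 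The main obstacle is the careful matching between the cocycle splitting $c=c_\pi c_{\pi'}$ and the direct-product decomposition $Z=Z_\pi\times Z_{\pi'}$ of the covering centre, so that the $\pi$-special factor of the ordinary lift descends to the correct $c_\pi$-piece in $G$ rather than to a twist; once this bookkeeping is in place, every remaining assertion follows from the n-decomposition and from the classical fact on Schur multipliers of $\pi$-groups.
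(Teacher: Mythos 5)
Your proposal takes a genuinely different route from the paper's. The paper never passes to a central extension and never invokes $\pi$-special characters: it applies \autoref{Satz: Clifford} recursively to the projective module $V$ itself along the $\pi$-series (\autoref{Satz: n-decomposition}); primitivity forces the inertia chain to remain all of $G$, so $V=Y_1\otimes\cdots\otimes Y_{2l}$ with each $Y_i$ a projective module of $G/N_{i-1}$ restricting irreducibly to the $\pi$- or $\pi'$-factor $N_i/N_{i-1}$, and grouping odd and even factors gives $V_\pi$ and $V_{\pi'}$. The identification of their classes with $c_\pi$ and $c_{\pi'}$ then comes for free from \autoref{Satz: pic piIrrGc piG}: a projective module restricting irreducibly to a $\pi$-group has $\pi$-degree, hence its coclass has $\pi$-order. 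Your route buys the ability to quote ordinary character theory; the paper's buys uniformity (the same argument yields the imprimitive version, \autoref{Satz: pi-decomposition}, with an induction from $J$) and avoids all central-character bookkeeping.

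However, one step as written does not hold up: the factorization $\tilde V=\tilde V_\pi\otimes\tilde V_{\pi'}$ into a \emph{$\pi$-special} and a \emph{$\pi'$-special} factor is not what \autoref{Satz: n-decomposition} delivers. Applied to the ordinary primitive module $\tilde V$ of $\tilde G$, that theorem produces factors that restrict irreducibly to the Hall subgroups, but it gives no control over their central characters on $Z$; a priori $Z_{\pi'}$ acts on the $\pi$-factor by a nontrivial linear $\mu$ (compensated by $\inv{\mu}$ on the other factor), and since $Z\leq[\tilde G,\tilde G]$ in a representation group you cannot remove $\mu$ by a linear twist. This is exactly the ``twist'' you flag at the end, and it is a genuine obstruction rather than bookkeeping. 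What your argument actually requires at that point is Isaacs' theorem that a primitive irreducible character of a $\pi$-separable group factors as a product of a $\pi$-special and a $\pi'$-special character \cite{IsaacsSolvable}; with that citation in place of the appeal to \autoref{Satz: n-decomposition}, the descent through $Z_{\pi'}$, the Hall restrictions, the triviality of $\res{c_{\pi'}}{H}$ and $\res{c_{\pi}}{H'}$, and the degree count are all correct.
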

The primitivity can be relaxed by using Clifford's Theory, and indeed we prove the above result in this form (\autoref{Satz: pi-decomposition}).
In particular, we find also the following application (\autoref{Satz: pi-decomposition degree}).
\begin{cor}
Let $G$ be $\pi$-separable.
An irreducible $c$-module $V$ has $\pi'$-degree if and only if it is induced from a $c$-module $V'$ of some subgroup $J$, so that $J$ contains a Hall $\pi$-subgroup of $G$ and $V'$ restricts irreducibly to the Hall $\pi'$-subgroups of $J$.
\end{cor}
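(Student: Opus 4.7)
The plan is to derive this corollary directly from the main $\pi$-decomposition theorem (\autoref{Satz: pi-decomposition}), whose Clifford-theoretic form produces, for every irreducible $c$-module $V$ of the $\pi$-separable group $G$, a subgroup $J$ containing a Hall $\pi$-subgroup $H$ together with an irreducible $c$-module $V' = V'_\pi \otimes V'_{\pi'}$ of $J$ satisfying $V = \ind{V'}{G}$, and such that $\res{V'_\pi}{H}$ and $\res{V'_{\pi'}}{H'_J}$ are irreducible, where $H'_J$ denotes a Hall $\pi'$-subgroup of $J$. Once this source $(J,V')$ is at hand, both implications of the corollary reduce to a degree count.

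For the reverse implication, assume $V = \ind{V'}{G}$ with $J \supseteq H$ and $\res{V'}{H'_J}$ irreducible. Then $\dim V = [G:J]\cdot\dim V'$; the index $[G:J]$ is a $\pi'$-number because $H \leq J$, while $\dim V' = \dim\res{V'}{H'_J}$ is a $\pi'$-number as the degree of an irreducible projective representation of the $\pi'$-group $H'_J$. Their product is therefore a $\pi'$-number, which is what we wanted.

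For the forward implication, apply the main theorem to obtain $J$ and $V' = V'_\pi\otimes V'_{\pi'}$. The degree formula $\dim V = [G:J]\cdot\dim V'_\pi\cdot\dim V'_{\pi'}$ together with the $\pi'$-hypothesis on $\dim V$ forces $\dim V'_\pi = 1$, since $\dim V'_\pi = \dim\res{V'_\pi}{H}$ is a $\pi$-number (an irreducible degree of the $\pi$-group $H$). Thus $V'_\pi$ is a linear character of $J$; because $\res{c_\pi}{H'_J}=1$, its restriction to $H'_J$ is a one-dimensional ordinary character. Consequently $\res{V'}{H'_J} = \res{V'_\pi}{H'_J}\otimes\res{V'_{\pi'}}{H'_J}$ is the tensor product of a one-dimensional module with the irreducible module $\res{V'_{\pi'}}{H'_J}$, and therefore remains irreducible, giving the desired subgroup $J$ and module $V'$.

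The real work is done by \autoref{Satz: pi-decomposition}, which produces the source $(J,V')$ containing a Hall $\pi$-subgroup and the tensor factorization with irreducible Hall restrictions; after that, the only substantive point is the collapse $\dim V'_\pi = 1$ forced by the $\pi'$-degree hypothesis, and the rest is bookkeeping on indices of Hall subgroups and on the triviality of $\pi$- and $\pi'$-cocycles on the complementary Hall subgroups.
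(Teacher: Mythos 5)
Your argument is essentially the one the paper intends: Corollary~\ref{Satz: pi-decomposition degree} is stated there without proof, as an immediate degree count from Theorem~\ref{Satz: pi-decomposition} ($\dim(V)_{\pi}=|G:J|_{\pi}\cdot\dim(V_{\pi})$, so $\pi'$-degree forces $|G:J|_{\pi}=1$ and $\dim V_{\pi}=1$), and both of your implications run exactly along these lines.

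One point needs correcting, though it does not sink the proof. You assert at the outset that Theorem~\ref{Satz: pi-decomposition} already produces a subgroup $J$ containing a Hall $\pi$-subgroup of $G$. It does not: the theorem only speaks of Hall $\pi$- and $\pi'$-subgroups \emph{of $J$}, and for a general irreducible $c$-module (one whose degree is divisible by primes in $\pi$) the subgroup $J$ it yields need not contain a Hall $\pi$-subgroup of $G$. In the forward direction this containment is part of the conclusion, and you must extract it from your own degree formula: $\dim V=[G:J]\cdot\dim V_{\pi}\cdot\dim V_{\pi'}$ with $\dim V$ a $\pi'$-number forces $[G:J]$ to be a $\pi'$-number (since $\dim V_{\pi'}$ already is one), and then $\pi$-separability of $G$ upgrades a Hall $\pi$-subgroup of $J$ to a Hall $\pi$-subgroup of $G$, exactly as in part $v)$ of Lemma~\ref{Satz: Clifford prime and coprime}. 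With that sentence inserted, the rest of your argument --- the collapse $\dim V_{\pi}=1$, the triviality of $\res{c_{\pi}}{H'}$, and the irreducibility of $\res{(V_{\pi}\otimes V_{\pi'})}{H'}$ as a linear twist of $\res{V_{\pi'}}{H'}$ --- is correct and matches the paper.
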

The remaining of this paper is divided into two sections, in \autoref{section:background} we provide some background, and in \autoref{section:results} we prove our results.

\section{Background}\label{section:background}
There are two ways to introduce this subject, starting from either ordinary representation theory or cohomology, and some standard references in the field are \cite{Benson,Brown,HuppertChar,Isaacs,Karpilovsky2}.
A \emph{projective representation} of $G$ is a homomorphism $\psi:G\to\PGL(V)$, for some complex vector space $V$ of finite dimension, which is the \emph{degree} of $\psi$.
A \emph{section} of $\psi$ is a lifting $\varphi:G\to\GL(V)$, and this determines a \emph{cocycle} $\alpha$ by the relation $x\varphi\!~y\varphi=\alpha(x,y)\!~xy\varphi$.
The cocycle associated to another section $\varphi'$ is the product $\alpha'=\alpha\cdot\delta\zeta$, where $\delta\zeta$ is a \emph{coboundary}.
The sets of the cocycles and the coboundaries are abelian groups by pointwise multiplication, denoted by $Z^2G$ and $B^2G$.
Therefore, $\psi$ determines a \emph{coclass} $c$ in the \emph{Schur multiplier} $H^2G=Z^2G/B^2G$, and we say that $\psi$ is a \emph{$c$-representation} and $V$ is a \emph{$c$-module}.

Like in the ordinary case, a $c$-module $V$ is \emph{irreducible} if its only submodules are the trivial ones, that is when $\varphi$ is not similar to some upper triangular form.
We denote by $\Irr(G|c)$ the similarity classes of irreducible $c$-modules, which we still identify with a set of representatives $\{V_1,\ldots,V_r\}$, and we say that $V$ lies in $\Irr(G|c)$.
The direct sum $U\oplus V$ of $c$-modules is also a $c$-module, and every $c$-module is the sum of irreducible $c$-modules.
On the other hand, if $U$ is a $a$-module and $V$ is a $b$-module, for $a,b\in H^2G$, then their tensor product is an $ab$-module.
Precisely, if $\varphi:G\to\GL(U)$ and $\upsilon:G\to\GL(V)$ have cocycles $\alpha$ and $\beta$, where $a=[\alpha]$ and $b=[\beta]$, then the diagonal action $\omega=\varphi\otimes\upsilon:G\to\GL(U\otimes V)$ has cocycle $ab=[\alpha\beta]$.

Also for the $c$-representations we have an associative algebra, the \emph{twisted group algebra} $\C^cG$, with basis a copy $G\sigma$ of $G$, where multiplication is defined on the basis by $g\sigma\!~h\sigma=\alpha(g,h)\!~gh\sigma$ and extended by linearity.
The algebra is associative precisely because $\alpha$ is a cocycle, on the other hand, by setting $g\sigma'=\zeta(g)\!~g\sigma$ we have a basis $G\sigma'$ associated to $\alpha'=\alpha\cdot\delta\zeta$, for this reason we avoid the more common notation $\C^\alpha G$.
The algebra $\C^cG$ is semisimple, so we have Wedderburn's Theorem and the degree formula $\sum_i(\dim V_i)^2=|G|$.
The class sums of the $c$-regular classes $\Cl(G|c)$ form a basis of the center $Z(\C^cG)$, thus $|\Irr(G|c)|=|\Cl(G|c)|$, and the number of irreducible $c$-modules is the number of $c$-regular conjugacy classes.
An element $x$ of $G$ is \emph{$c$-regular} if the class sum $\inv{|G_x|}\sum_g x\sigma^{g\sigma}$ is non--zero.
Since $x\sigma^{g\sigma}=\cg{\alpha}{x}{g}x^g\sigma$ for $\cg{\alpha}{g}{y}=\alpha(x,g)\inv{\alpha(g,x^g)}$, the class sum depends on the representative $x$, still conjugate elements have collinear sums and $c$-regularity is a property of $x^G$.
On the other hand, if $c\neq 1$ there are no $c$-modules of degree one, as we have the following result (see \cite{Karpilovsky2}).
\begin{thm}\label{Satz: o(c), dim V and oG}
  The order $o(c)$ divides the degree of every $c$-module of $G$, the degree of every  irreducible $c$-module divides $|G|$, and $o(c)^2$ divides $|G|$.
\end{thm}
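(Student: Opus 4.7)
The plan is to establish the three divisibilities separately, using a determinant trick for the first, a lift to an ordinary representation for the second, and Wedderburn's degree formula for the third.

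For the divisibility $o(c)\mid\dim V$, the approach is to fix a section $\varphi$ of a $c$-representation with cocycle $\alpha$, and take determinants in the cocycle relation $\varphi(x)\varphi(y)=\alpha(x,y)\,\varphi(xy)$. The resulting identity
\[
(\det\varphi)(x)\,(\det\varphi)(y)=\alpha(x,y)^{\dim V}\,(\det\varphi)(xy)
\]
exhibits $\alpha^{\dim V}$ as the coboundary $\delta(\det\varphi)^{-1}$, so $c^{\dim V}=1$ in $H^2G$ and $o(c)\mid\dim V$. Note this argument works for every $c$-module, irreducible or not.

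For the divisibility $\dim V\mid|G|$ when $V$ is irreducible, the strategy is to reduce to an ordinary representation problem and invoke Ito's theorem. Since $m=o(c)$ is finite, the cocycle $\alpha$ is cohomologous to one taking values in the cyclic group $\mu_m\subset\C^*$: writing $\alpha^m=\delta\zeta$ and choosing a pointwise $m$-th root of $\zeta$, the modification of $\alpha$ by the associated coboundary has trivial $m$-th power. Having arranged this, form the central extension $\tilde G$ of $G$ by $\mu_m$ with underlying set $G\times\mu_m$ and product $(g,a)(h,b)=(gh,ab\,\alpha(g,h))$, which is a \emph{finite} group of order $|G|m$. The given $c$-representation then lifts to an ordinary representation $\tilde\varphi(g,a)=a\,\varphi(g)$ of $\tilde G$ on $V$, and irreducibility is preserved since the images of $\varphi$ and $\tilde\varphi$ span the same subalgebra of $\End V$. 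Applying Ito's theorem to the abelian normal subgroup $\mu_m\triangleleft\tilde G$ yields $\dim V\mid[\tilde G:\mu_m]=|G|$.

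Finally, the divisibility $o(c)^2\mid|G|$ is immediate from the first step: $o(c)^2$ divides each $(\dim V_i)^2$ for $V_i\in\Irr(G|c)$, so Wedderburn's degree formula $\sum_i(\dim V_i)^2=|G|$ gives the conclusion. The only genuinely delicate point is the passage to a cocycle with values in $\mu_m$, which is what makes the central extension $\tilde G$ a finite group and thus makes Ito's theorem applicable; the rest of the argument is routine verification.
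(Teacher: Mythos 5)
Your proposal is correct. Note that the paper does not actually prove this statement: it is quoted from Karpilovsky's book, so there is no in-text argument to compare against. Your three steps are the standard ones found in that reference: the determinant of a section exhibits $\alpha^{\dim V}$ as a coboundary (valid for every $c$-module, as you observe); the passage to a class-$c$ cocycle with values in $\mu_{o(c)}$ yields a finite central extension $\tilde G$ of order $|G|\cdot o(c)$ to which Ito's theorem (or just the divisibility of irreducible degrees by the index of the centre) applies; and the degree formula $\sum_i(\dim V_i)^2=|G|$, which the paper records earlier, gives $o(c)^2\mid|G|$ from the first part. The only details left implicit are harmless: you should normalize $\alpha(1,g)=\alpha(g,1)=1$ so that $\{1\}\times\mu_m$ really is a central subgroup of $\tilde G$, and note that replacing $\alpha$ by the cohomologous $\mu_m$-valued cocycle amounts to rescaling the section $\varphi$, which changes neither the projective representation nor its irreducibility.
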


For $H\leq G$ by restriction we have a coclass $\res{c}{H}$, thus $\C^cH\leq\C^cG$.
and every $\res{c}{H}$-module $U$  induces a $c$-module $\ind{U}{^cG}=U\otimes_{\C^cH}\C^cG$.
The basic properties of induction are transitivity, Frobenius reciprocity, and Mackey's decomposition (see \cite{Benson,Karpilovsky2}).
\begin{thm}\label{Satz: properties of induction}
Let $H,K,L\leq G$ with $H\leq K$, let $T$ be a double transversal for $H$ and $L$ in $G$, let $V$ be a $b$-module and $U$ a $\res{c}{H}$-module for $b,c\in H^2G$.
\begin{enumerate}[i)]
\item $\ind{U}{^cG}=\ind{\ind{U}{^cK}}{^cG}$;
\item
$V\otimes(\ind{U}{^cG})=\ind{(\res{V}{H}\otimes U)}{^{bc}G}$;
\item
$\displaystyle\res{\ind{U}{^cG}}{L}=\sum_{t\in T}\ind{\res{U^t}{H^t\cap L}}{^cL}$.
\end{enumerate}
\end{thm}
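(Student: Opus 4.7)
The plan is to reduce all three items to familiar module-theoretic identities for the twisted group algebras, keeping careful track of the cocycles. Fix once and for all a section $\sigma$ of $c$ giving $\C^cG$ its basis $G\sigma$ with cocycle $\alpha$, and a section $\tau$ of $b$ giving $\C^bG$ its basis $G\tau$ with cocycle $\beta$; then $\C^{bc}G$ has basis $G\tau\sigma$ with cocycle $\alpha\beta$. For any subgroup $J\leq G$, restricting $\sigma$ to $J$ exhibits $\C^cG$ as a free left (and right) $\C^cJ$-module, with basis any transversal of $J$ in $G$ lifted via $\sigma$.

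Item $i)$ is then formal: the freeness above gives a $(\C^cK,\C^cG)$-bimodule isomorphism $\C^cK\otimes_{\C^cK}\C^cG\cong\C^cG$, and associativity of the tensor product yields
$$\ind{\ind{U}{^cK}}{^cG}=(U\otimes_{\C^cH}\C^cK)\otimes_{\C^cK}\C^cG\cong U\otimes_{\C^cH}\C^cG=\ind{U}{^cG}.$$

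For $ii)$ I would pick a left transversal $T$ of $H$ in $G$, so that $\ind{U}{^cG}=\bigoplus_{t\in T}U\otimes t\sigma$ as a vector space, with the right action of $g\sigma\in\C^cG$ permuting the summands (up to cocycle scalars) according to the rule $t\mapsto t'$ with $tg\in Ht'$. The candidate map
$$\Phi\colon V\otimes\ind{U}{^cG}\longrightarrow\ind{(\res{V}{H}\otimes U)}{^{bc}G}$$
sends a pure tensor $v\otimes u\otimes t\sigma$ to $((t\tau)^{-1}\cdot v\otimes u)\otimes t\tau\sigma$, where $(t\tau)^{-1}\cdot v$ denotes the left action of the algebra inverse of $t\tau$ inside $\C^bG$, which is a known scalar multiple of $\varphi(t)^{-1}v$. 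That $\Phi$ is a vector-space isomorphism is immediate from the transversal decomposition on both sides; checking $\C^{bc}G$-linearity reduces to combining the cocycle identities for $\alpha$, $\beta$, and $\alpha\beta$ with the twisted induction formula, and this is the main technical burden.

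Finally, $iii)$ follows from the double coset decomposition $G=\bigsqcup_{t\in T}HtL$, which on twisted group algebras lifts to the $(\C^cH,\C^cL)$-bimodule splitting
$$\C^cG=\bigoplus_{t\in T}\C^cH\cdot t\sigma\cdot\C^cL.$$
For each $t$, conjugation by $t\sigma$ in $\C^cG$ carries $\C^c(H^t\cap L)$ into itself and twists the restricted module $\res{U}{H^t\cap L}$ into the conjugate $\res{U^t}{H^t\cap L}$; the assignment $h\sigma\otimes l\sigma\mapsto h\sigma\cdot t\sigma\cdot l\sigma$ then gives a bimodule isomorphism $\C^cH\otimes_{\C^c(H^t\cap L)}\C^cL\cong\C^cH\cdot t\sigma\cdot\C^cL$. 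Tensoring each summand with $U$ over $\C^cH$ and reading the result as a left $\C^cL$-module produces the $t$-term $\ind{\res{U^t}{H^t\cap L}}{^cL}$ of the Mackey sum. The main obstacle, as anticipated, is the cocycle bookkeeping in $ii)$; once handled there, the analogous bookkeeping in $iii)$ is a clean reprise.
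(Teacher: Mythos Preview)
The paper does not actually prove this theorem: it is stated as background and attributed to the references \cite{Benson,Karpilovsky2}, so there is no ``paper's own proof'' to compare against. Your argument is the standard module-theoretic one found in those references and is essentially correct.

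A couple of small points of care, not genuine gaps. First, the paper's convention is that induction is $U\otimes_{\C^cH}\C^cG$, so modules are right modules throughout; your occasional references to ``left action'' and ``left $\C^cL$-module'' should be adjusted accordingly (in particular the transversal in $ii)$ should be a set of right coset representatives so that $\C^cG=\bigoplus_t\C^cH\cdot t\sigma$ as a left $\C^cH$-module). Second, in $iii)$ the bimodule isomorphism is more precisely
\[
\C^cH\cdot t\sigma\cdot\C^cL\;\cong\;U\text{-side conjugate of }\C^cH\otimes_{\C^c(H\cap{}^tL)}\C^cL,
\]
i.e.\ the balancing subalgebra sits inside $H$, not inside $H^t\cap L$; the passage to $H^t\cap L$ happens only after you push the $t\sigma$ across and replace $U$ by its conjugate $U^t$. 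With those cosmetic fixes your outline goes through, and the cocycle bookkeeping you flag in $ii)$ is indeed the only place where any real computation occurs.
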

The projective representations relate the representations of a group to those of its normal subgroups.
This connection origins with the work of Schur and flourishes with Clifford's theory, and the contribution of Mackey \cite{HuppertChar,Isaacs,Karpilovsky2}.
First, when $N\trianglelefteq G$ then we have the restriction $\res{}{N}:H^2G\to H^2N$ but also the \emph{inflation} $\inf{}{}:H^2(G/N)\to H^2G$,
which regards a cocycle of $G/N$ as one of $G$ by setting $\alpha(g,h)=\alpha(Ng,Nh)$.
The inflation is not necessarily injective over the coclasses,
still $\inf{H^2(G/N)}{}\leq\ker\res{}{N}$, and the condition $c\in\inf{H^2(G/N)}{}$ is equivalent to have $c=[\alpha]$ for some cocycle satisfying $\res{\alpha}{N}=1$ and $\cg{\alpha}{x}{g}=1$ for all $x\in N$ and $g\in G$.
Second, for $b\in H^2(G/N)$, then a $b$-module $W$ is a $\inf{b}{}$-module simply by composition $G\to G/N\to\GL(W)$, and we may drop the symbol $\inf{}{}$ without ambiguity.
Moreover, since $\res{b}{N}=1$, then $\res{W}{N}\simeq\C^n$ is a trivial module of degree $n=\dim(W)$.
It follows that $\res{(Y\otimes W)}{N}\simeq(\res{Y}{N})^n$ for every $c$-module $Y$ of $G$ and, in particular, the irreducible components of $\res{(Y\otimes W)}{N}$ are the same of $\res{Y}{N}$.
In fact $Y\otimes W$ is irreducible precisely when $V=\res{Y}{N}$ and $W$ are irreducible, and in this case $V$ is $G$-invariant.
To describe a centralizer $\IG{G}{c}{V}$ for the action of $G$ over $\Irr(N|c)$ is more delicate than in the ordinary case.
Let $V$ be an irreducible $\res{c}{N}$-module, so that $\varphi:N\to\GL(V)$ is associated with the cocycle $\res{\alpha}{N}$, then the \emph{inertia subgroup} of $V$ with respect to $c$ is
\[\IG{G}{c}{V}=\left\{\!~g\in G\!~|\!~\exists\!~ y\in\GL(V)\!~:\!~x\varphi^y=\cg{\alpha}{x}{g}x^g\varphi\!~,\!~\forall\!~x\in N\!~\right\}\]
and $V$ is \emph{$\C^cG$-invariant}  if $\IG{G}{c}{V}=G$.
In fact $\IG{G}{c}{V}$ depends on the coclass $c$ rather than the cocycle $\alpha$ and, in addition, $\IG{G}{cb}{V}=\IG{G}{c}{V}$ for all $b\in H^2(G/N)$.
We denote by $\Irr(G|c,V)$ the similarity classes of irreducible $c$-modules whose restriction to $N$ has $V$ among the constituents, so that $X\in\Irr(G|c,V)$ if and only if $\res{X}{N}\simeq V\oplus Z$ for some $\res{c}{N}$-module $Z$.
In this regard the two main theorems of Clifford's Theory are the following, with Part II due to Schur. They can be written formally as
\begin{equation}\label{eq:Schur Clifford}
\Irr(G|c,V)=\ind{(Y\otimes\Irr(\IG{G}{c}{V}/N|\!~b\!~))}{^cG} \ \ \ \ ,\ \ \ \ V'=\ind{(Y\otimes W)}{^cG}
\end{equation}
and they can be read: every irreducible $\res{c}{N}$-module extends to its inertia subgroup and then it induces irreducibly to $G$; conversely, every irreducible $c$-module whose restriction to $N$ has $V$ among its constituent is of the form $\ind{(Y\otimes W)}{^cG}$ for some irreducible $b$-module $W$ of $\IG{G}{c}{V}/N$.
\begin{thm}\label{Satz: Clifford}
Let $V\in\Irr(N|c)$ and $J=\IG{G}{c}{V}$, for some $N\trianglelefteq G$ and $c\in H^2G$.
\begin{enumerate}[I.]
  \item 
If $V'\in\Irr(G|c,V)$, then $\res{V'}{N}$ is the sum of the $G$-conjugates of $V$ occurring all with the same multiplicity.
Moreover, the induction $\ind{}{^cG}$ defines a bijection $\Irr(J|c,V)\to\Irr(G|c,V)$.
  \item There is a unique coclass $\hat{c}\in H^2G$ such that the module $V$ extends to some module $Y\in\Irr(J|\!~\hat{c}\!~)$.
  Moreover, $\hat{c}\cdot\inf{b}{}=c$ for some $b\in H^2(J/N)$, and the map $W\mapsto Y\otimes W$ defines a bijection $\Irr(J/N|\!~b\!~)\to\Irr(J|c,V)$.
\end{enumerate}
\end{thm}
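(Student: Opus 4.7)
The plan is to follow the classical Clifford--Schur strategy, carried out inside the twisted group algebra $\C^cG$. I treat the two parts separately.

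\textbf{Part I.} Let $U$ be any irreducible constituent of $\res{V'}{N}$. Since each $G$-conjugate of $U$ is a $\C^cN$-submodule of $V'$, their sum is a nonzero $\C^cG$-submodule and so, by irreducibility of $V'$, equals $V'$; this forces every constituent of $\res{V'}{N}$ to be a $G$-conjugate of $V$. Equal multiplicity follows because $g\sigma$ maps the $V$-isotypic component of $\res{V'}{N}$ isomorphically onto the $V^g$-isotypic one. For the induction bijection, take $U\in\Irr(J|c,V)$; the same argument applied inside $J$ gives that $\res{U}{N}$ is $V$-isotypic. The Mackey decomposition in \autoref{Satz: properties of induction} with $L=N$ then exhibits $\res{\ind{U}{^cG}}{N}$ as a sum of translates of $V$ whose $V$-isotypic component is exactly $\res{U}{N}$, so $U$ can be recovered from $\ind{U}{^cG}$ as the $\C^cJ$-submodule generated by that component; the inverse assignment sends $V'$ to the $V$-isotypic component of $\res{V'}{J}$, and irreducibility of $\ind{U}{^cG}$ follows from the ensuing correspondence between $\C^cG$-submodules and $\C^cJ$-submodules of $U$.

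\textbf{Part II.} For the extension, fix a cocycle $\alpha$ representing $c$ and a section $\varphi:N\to\GL(V)$ with cocycle $\res{\alpha}{N}$. For each $g\in J$, the inertia condition supplies $y_g\in\GL(V)$ with $x\varphi^{y_g}=\cg{\alpha}{x}{g}\,x^g\varphi$, with $y_x=x\varphi$ for $x\in N$. Choosing one $y_g$ per coset of $N$ in $J$ extends $\varphi$ to a section $g\hat\varphi=y_g$ with some cocycle $\hat\alpha$ restricting to $\res{\alpha}{N}$, yielding an extension $Y\in\Irr(J|\hat c)$ with $\hat c=[\hat\alpha]$. A different choice $y'_g=\lambda(g)y_g$ with $\lambda|_N=1$ modifies $\hat\alpha$ by the coboundary $\delta\lambda$ of a function constant on $N$-cosets, i.e.\ by a cocycle inflated from $J/N$; thus $\hat c$ is well-defined and unique, since any extension of $V$ arises from such a choice. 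A direct computation of $\alpha\hat\alpha^{-1}$ on $J$ shows this ratio is trivial on $N$ and constant along $N$-cosets, identifying it as the inflation of a cocycle on $J/N$, so $c=\hat c\cdot\inf{b}{}$ for some $b\in H^2(J/N)$.

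For the final bijection $W\mapsto Y\otimes W$: if $W\in\Irr(J/N|b)$, viewed through $\inf{b}{}$ as an irreducible $J$-module, then by the remark before the theorem $Y\otimes W$ is an irreducible $c$-module of $J$ with $N$-restriction $V^{\dim W}$, so $Y\otimes W\in\Irr(J|c,V)$. Conversely, for $X\in\Irr(J|c,V)$ the restriction $\res{X}{N}$ is $V$-isotypic by Part I, so $W=\Hom_{\C^{\res{c}{N}}N}(Y,X)$ carries a natural $J$-action trivial on $N$, making it a $b$-module of $J/N$; the canonical evaluation $Y\otimes W\to X$ is a $\C^cJ$-isomorphism, and irreducibility of $X$ transfers to $W$. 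The main obstacle is the cocycle bookkeeping in Part II: one must verify that the ambiguity in the $y_g$'s accounts for exactly $\inf{H^2(J/N)}{}$, so that $\hat c$ is genuinely unique and the factorization $c=\hat c\cdot\inf{b}{}$ holds on the nose rather than merely up to coboundary.
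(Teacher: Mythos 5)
This theorem is not proved in the paper: it appears in the Background section as a quoted result, with references to \cite{HuppertChar,Isaacs,Karpilovsky2} and the remark that Part II is due to Schur. So there is no in-paper argument to compare against; judged on its own, your proof is the standard Clifford--Mackey--Schur argument and is essentially correct. Two points should be tightened. First, in the uniqueness step, the claim that $\delta\lambda$ is the coboundary of a function constant on $N$-cosets (hence inflated from $J/N$) is neither needed nor generally true; all you need, and all you have, is that $\hat\alpha'\hat\alpha^{-1}=\delta\lambda$ is a coboundary, which already makes $\hat c$ well defined. Second, the ``direct computation'' showing $[\alpha\hat\alpha^{-1}]\in\inf{H^2(J/N)}{}$ is best routed through the criterion the paper records: $\beta=\alpha\hat\alpha^{-1}$ satisfies $\res{\beta}{N}=1$ because $\hat\alpha$ and $\alpha$ agree on $N\times N$, and $\cg{\beta}{x}{g}=1$ because the defining relation $x\varphi^{y_g}=\cg{\alpha}{x}{g}\!~x^g\varphi$ forces $\cg{\hat\alpha}{x}{g}=\cg{\alpha}{x}{g}$. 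You should also make explicit the Schur's-lemma step that $y_gy_hy_{gh}^{-1}$ is a scalar (using that $\varphi(N)$ spans $\End(V)$), so that $\hat\alpha$ is genuinely a $2$-cocycle. With those details written out, the argument is complete.
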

Next we recall some facts on $p$-solvable and $\pi$-separable groups.
We denote by $\Pi(n)$ the set of the positive prime divisors of an integer $n$, and we write $\Pi(X)=\bigcup_{n\in X}\Pi(n)$ for all $X\subseteq\Z$.
Thus $\Pi(\N)$ is our notation for the set of all the positive prime numbers.
For $\pi\subseteq\Pi(\N)$, we write $\pi'$ for the complement of $\pi$ in $\Pi(\N)$.
For $n\in\N$ we have $n=n_{\pi}n_{\pi'}$ where $n_{\pi}$ and $n_{\pi'}$ are the $\pi$-part and the $\pi'$-part of $n$, satisfying $\Pi(n_{\pi})\subseteq \pi$ and $\Pi(n_{\pi'})\subseteq \pi'$ respectively.
When $G$ is a finite group and $g$ is one of its elements, we write $\Pi(G)=\Pi(|G|)$ and $\Pi(g)=\Pi(o(g))$.
If $\Pi(G)=\{p_1,\ldots,p_k\}$, we write uniquely $g=g_{p_1}g_{p_2}\ldots g_{p_k}$ where $g_{p_i}\in\gen{g}$ and $\Pi(g_{p_i})=\{p_i\}$.
The group $O_{\pi}(G)$ is the maximal normal subgroup $N$ of $G$ satisfying $\Pi(N)\subseteq\pi$.
Inductively, given $\pi_1,\ldots,\pi_{k}\subseteq\Pi(\N)$, the group $O_{\pi_1\pi_2\ldots\pi_k}(G)$ is the correspondent of $O_{\pi_k}(G/O_{\pi_1\ldots\pi_{k-1}(G)})$ in $G$.
The group $G$ is \emph{$\pi$-separable} if it has a normal series $N_\ast:1\leq N_1\leq\ldots\leq N_{l-1}\leq G$ such that $\pi(N_{i}/N_{i-1})\subseteq\pi$ or $\pi(N_{i}/N_{i-1})\subseteq\pi'$ for all $i=1,\ldots,l$.
Every $\pi$-separable group has a characteristic normal $\pi$-series \[O_{\pi\ast}(G):1\leq O_{\pi}(G)\leq O_{\pi\pi'}(G)\leq O_{\pi\pi'\pi}(G)\leq\ldots\leq G\] called \emph{the $\pi$-series}, there is a Hall $\pi$-subgroup in $G$, and every $\pi$-subgroup is contained in some Hall $\pi$-subgroup of $G$.
There is possibly more than one conjugacy class of Hall $\pi$-subgroups.
Clearly $\pi'$-separability is the same condition of $\pi$-separability, and so a $\pi$-separable groups has the $\pi'$-series $O_{\pi'\ast}(G)$ and some Hall $\pi'$-subgroup in $G$.
For $\pi=\{p\}$ the definition of $p$-separable group coincides with that of $p$-solvable group, we have the series \[O_{p\ast}(G):1\leq O_{p}(G)\leq O_{pp'}\leq O_{pp'p}\leq\ldots\leq G\!~,\] \[O_{p'\ast}(G):1\leq O_{p'}(G)\leq O_{p'p}\leq O_{p'pp'}\leq\ldots\leq G\!~.\]
The latter, that is the $p'$-series, satisfies a fundamental property discovered by P.\!~Hall and G.\!~Higman
\cite[Lemma 1.2.3]{HallHigman}.
\begin{lem}\label{Satz: Hall Higman}
Let $G$ be $p$-solvable.
If $O_{p'}(G)=1$, then $C_G(O_{p}(G))\leq O_{p}(G)$.
In general, we have that 
$C_G(O_{(p'p)^{k+1}}(G)/O_{(p'p)^kp'})\leq O_{(p'p)^{k+1}}(G)$.
\end{lem}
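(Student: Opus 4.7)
My plan is to prove the base case---namely the case $O_{p'}(G)=1$---and then to deduce the general statement by applying it to the quotient $\bar G = G/O_{(p'p)^kp'}(G)$. Indeed, by construction of the $p'$-series the quotient $\bar G$ satisfies $O_{p'}(\bar G)=1$ and $O_p(\bar G)=O_{(p'p)^{k+1}}(G)/O_{(p'p)^kp'}(G)$, so the base case applied to $\bar G$ translates back, on taking preimages in $G$, to the desired inclusion.

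For the base case set $F=O_p(G)$ and $C=C_G(F)$, so that $C\trianglelefteq G$ and $C\cap F=Z(F)$. Suppose for contradiction $C\not\le F$, equivalently $C/Z(F)\ne 1$. Since $C/Z(F)$ is a nontrivial normal subgroup of $G/Z(F)$ and $G/Z(F)$ is $p$-solvable, there exists a minimal normal subgroup $H/Z(F)$ of $G/Z(F)$ contained in $C/Z(F)$, and it is elementary abelian---either a $p$-group or a $p'$-group. In the first case, $H$ itself is a normal $p$-subgroup of $G$, whence $H\le F$, so $H\le C\cap F=Z(F)$, contradicting $H/Z(F)\ne 1$. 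In the second case, $Z(F)\trianglelefteq H$ has index coprime to $|Z(F)|$, so Schur--Zassenhaus produces a complement $K\le H$ which is a Hall $p'$-subgroup of $H$; since $K\le C$ centralizes $F$ and in particular $Z(F)$, we obtain $H=Z(F)\times K$, so $K$ is the unique Hall $p'$-subgroup of $H$. Therefore $K$ is characteristic in $H$ and normal in $G$, forcing $K\le O_{p'}(G)=1$ and hence $H=Z(F)$, another contradiction.

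The main technical point is the second case: to convert the abstract nontrivial normal $p'$-section of $G$ lying inside $C/Z(F)$ into a genuine normal $p'$-subgroup of $G$, one must combine Schur--Zassenhaus with the observation that the complement is centralized by $Z(F)$, which is precisely what upgrades the Hall complement first to a direct factor of $H$ and then to a characteristic subgroup. Once this is in place, the reduction to the general case is just bookkeeping with the $p'$-series and the definition $C_G(M/L)/L=C_{G/L}(M/L)$.
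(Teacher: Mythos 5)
The paper does not prove this lemma at all: it is quoted verbatim from Hall--Higman \cite[Lemma 1.2.3]{HallHigman}, so there is no in-paper argument to compare against. Your proof is correct and is essentially the classical Hall--Higman argument: reduce to the case $O_{p'}(G)=1$ via the quotient $G/O_{(p'p)^kp'}(G)$ (where indeed $O_{p'}$ vanishes and $O_p$ is the next term of the $p'$-series), then set $F=O_p(G)$, $C=C_G(F)$, pass to $G/Z(F)$, and kill a minimal normal subgroup of either parity --- the $p$-case by maximality of $O_p(G)$, the $p'$-case by Schur--Zassenhaus plus the observation that the complement is centralized by $Z(F)$, hence characteristic and contained in $O_{p'}(G)=1$. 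All the steps check out, including the verification that $O_{p'}(G/O_{(p'p)^kp'}(G))=1$, and note that Schur--Zassenhaus needs no solvability hypothesis here since the normal Hall subgroup $Z(F)$ is abelian. One small inaccuracy: your parenthetical claim that the minimal normal subgroup of $G/Z(F)$ is \emph{elementary abelian} is not justified for merely $p$-solvable groups --- in the $p'$-case it could be a direct product of nonabelian simple $p'$-groups. Fortunately your argument never uses abelianity of $H/Z(F)$, only the $p$-versus-$p'$ dichotomy (which does hold, since a minimal normal subgroup is a product of isomorphic composition factors), so the proof is unaffected; you should simply delete ``elementary abelian'' or restrict that adjective to the $p$-case.
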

For a coclass $c$ in $H^2G$ we write $\Pi(c)=\Pi(o(c))$, for a $c$-module $V$ we write $\Pi(V)$ in place of $\Pi(\dim V)$, and thus $\Pi(\Irr(G|c))$ denotes the set of prime divisors of the irreducible degrees.
We have a fundamental result that is actually true for the higher cohomology groups (see \cite[Theorem 10.3]{Brown}).
\begin{thm}\label{Satz: H2Gp and H2H}
  If $H$ and $(H^2G)_{\pi}$ are Hall $\pi$-subgroups of $G$ and $H^2G$, then the restriction $\res{}{H}:(H^2G)_{H}\to H^2H$ is injective.
\end{thm}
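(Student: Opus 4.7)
The plan is to deduce the theorem from the classical transfer / corestriction formula in group cohomology. Recall that together with restriction $\res{}{H}:H^2G\to H^2H$ there is a corestriction map $\cores{}{G}:H^2H\to H^2G$ defined via a transversal, and for every $c\in H^2G$ one has
\[
\cores{(\res{c}{H})}{G}\;=\;[G:H]\cdot c,
\]
where we use additive notation on the abelian group $H^2G$. A self-contained proof of this identity can be given at the level of cocycles using a fixed right transversal $T$ of $H$ in $G$: one checks that the composition $\cores{}{G}\circ\res{}{H}$ is induced by $\alpha\mapsto\prod_{t\in T}\alpha^t$, which on cohomology classes is just multiplication by $|T|=[G:H]$.

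Now suppose $H$ is a Hall $\pi$-subgroup of $G$, so that $[G:H]$ is a $\pi'$-number. The subgroup $(H^2G)_\pi$ is by definition the set of elements whose order is a $\pi$-number. Multiplication by a fixed $\pi'$-integer $m$ is therefore an automorphism of $(H^2G)_\pi$: since $\gcd(m,o(c))=1$ for every $c\in(H^2G)_\pi$, there exists an integer $k$ with $km\equiv 1\pmod{o(c)}$, showing that the map $c\mapsto mc$ is bijective on $(H^2G)_\pi$.

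Applied to $m=[G:H]$, this tells us that $\cores{}{G}\circ\res{}{H}$ is an automorphism of $(H^2G)_\pi$. In particular, its factor $\res{}{H}$ restricted to $(H^2G)_\pi$ is injective, which is exactly the assertion. The one step requiring care is the corestriction-restriction formula, but this is standard and is the content of \cite[Theorem 10.3]{Brown}; no real obstacle beyond bookkeeping arises in the present degree-two setting.
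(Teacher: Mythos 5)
Your argument is correct and is precisely the standard transfer argument underlying the result the paper invokes: the paper offers no independent proof and simply cites \cite[Theorem 10.3]{Brown}, which establishes injectivity of restriction on the $\pi$-primary component via the identity $\mathrm{cores}\circ\mathrm{res}=[G:H]\cdot\mathrm{id}$ exactly as you do. The only loose point is your cochain-level description of the composite as $\alpha\mapsto\prod_{t\in T}\alpha^{t}$ (the transfer on cochains requires tracking how each $gt$ refactors through the transversal, not just a product of conjugates), but since you defer to Brown for that identity the argument stands.
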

In particular
$\Pi(c)\subseteq\pi'$ precisely when $c$ restricts trivially to $H$ and, combining this fact with \autoref{Satz: o(c), dim V and oG}, we see that the primes dividing the order of a coclass $c$ form a subset of the primes dividing the degrees of the irreducible $c$-modules.
\begin{cor}\label{Satz: pic piIrrGc piG}
 We have  $\Pi(c)\subseteq\Pi(V)\subseteq\Pi(\Irr(G|c))\subseteq\Pi(G)$ for every irreducible $c$-module $V$ of $G$.
  If $H$ is a Hall $\pi$-subgroup of $G$, then the statements $\Pi(c)\subseteq\pi'$, $\res{c}{H}=1$, and $\dim(X)=1$ for some $X\in\Irr(H|c)$, are equivalent.
\end{cor}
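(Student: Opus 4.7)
The plan is to read the chain of inclusions $\Pi(c)\subseteq\Pi(V)\subseteq\Pi(\Irr(G|c))\subseteq\Pi(G)$ directly off \autoref{Satz: o(c), dim V and oG}. The first inclusion holds because $o(c)$ divides $\dim V$, so every prime dividing $o(c)$ also divides $\dim V$. The middle inclusion is tautological, since $\dim V$ is one of the degrees occurring in $\Irr(G|c)$. The last inclusion is immediate because each irreducible $c$-degree divides $|G|$, hence any prime dividing one of them divides $|G|$.

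For the triple equivalence, I would first dispose of the easy direction $\res{c}{H}=1\Leftrightarrow \dim(X)=1$ for some $X\in\Irr(H|c)$. If $\res{c}{H}=1$, then the twisted algebra $\C^{\res{c}{H}}H$ is similar to the ordinary group algebra $\C H$, and in particular admits a one-dimensional module, for instance the trivial one. Conversely, if some $X\in\Irr(H|c)$ satisfies $\dim X=1$, then \autoref{Satz: o(c), dim V and oG} forces $o(\res{c}{H})$ to divide $1$, so $\res{c}{H}=1$.

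The remaining equivalence $\Pi(c)\subseteq\pi'\Leftrightarrow \res{c}{H}=1$ is where \autoref{Satz: H2Gp and H2H} enters. Since $H^2G$ is a finite abelian group, I would decompose $c=c_\pi c_{\pi'}$ into its $\pi$-and $\pi'$-part. The $\pi'$-part restricts trivially for automatic reasons: by \autoref{Satz: o(c), dim V and oG} the order $o(\res{c_{\pi'}}{H})^2$ divides $|H|$, which is a $\pi$-number, while $o(\res{c_{\pi'}}{H})$ also divides the $\pi'$-number $o(c_{\pi'})$, forcing $\res{c_{\pi'}}{H}=1$. Hence $\res{c}{H}=\res{c_\pi}{H}$, and since $c_\pi$ lies in the Hall $\pi$-subgroup of $H^2G$, \autoref{Satz: H2Gp and H2H} yields $\res{c_\pi}{H}=1\Leftrightarrow c_\pi=1\Leftrightarrow \Pi(c)\subseteq\pi'$. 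The main subtlety I expect is precisely this observation that the $\pi'$-part of $c$ disappears on restriction to $H$ for free, so that the injectivity furnished by \autoref{Satz: H2Gp and H2H} can be applied cleanly to the $\pi$-part alone.
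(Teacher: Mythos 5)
Your proposal is correct and follows essentially the same route as the paper, which leaves the proof implicit in the surrounding discussion: the chain of inclusions is read off \autoref{Satz: o(c), dim V and oG} exactly as you do, and the equivalence $\Pi(c)\subseteq\pi'\iff\res{c}{H}=1$ is obtained from the injectivity of restriction on the Hall $\pi$-part of $H^2G$ given by \autoref{Satz: H2Gp and H2H}, after observing that the $\pi'$-part of $c$ dies on restriction to $H$. Your additional details --- the decomposition $c=c_\pi c_{\pi'}$ and the order argument killing $\res{c_{\pi'}}{H}$, plus the easy equivalence with the existence of a linear module of $H$ --- are exactly the steps the authors are silently invoking.
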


\section{Results}\label{section:results}
Decomposing an irreducible $c$-module $W$ of $G$ over a normal subgroup $N$ gives some information about the prime divisors of its degree.
Indeed, a direct consequence of Clifford's theory is that, if a prime $p$ does not divide the degree of $W$, then it does not divide the degree of the irreducible constituents of $\res{W}{N}$ and, moreover, every such constituent is invariant under the action of some Sylow $p$-subgroup $P$ of $G$.
Another consequence is that an irreducible $c$-module $V$ of $N$ that is invariant under a coprime action is extendable.
A more precise statement can be phrased as follows.
\begin{lem}\label{Satz: Clifford prime and coprime}
With the notation of \autoref{Satz: Clifford}, we have the following statements.
\begin{enumerate}[i)]
\item $\Pi(V')=\Pi(V)\cup\Pi(W)\cup\Pi(|G:J|)$.
\item $\Pi(\Irr(G|c,V))=\Pi(V)\cup\Pi(\Irr(J/N|b))\cup\Pi(|G:J|)$.
\item $\Pi(\Irr(N|c))\subseteq\Pi(N)\cap\Pi(G|c)$.
\item If $\Pi(N)\cap\Pi(J/N)=\varnothing$ and $\Pi(c)\subseteq\Pi(N)$ then $b=1$, so that $Y\in\Irr(J|c)$ and $W\in\Irr(J/N)$.
\item
If $G$ is $\pi$-separable and $\Pi(V')\subseteq\pi'$, for some set of prime $\pi$, then $J$ contains a Hall $\pi$-subgroup of $G$.
\end{enumerate}
\end{lem}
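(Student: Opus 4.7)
The plan is to compute with the explicit description $V' = \ind{(Y \otimes W)}{^cG}$ from Clifford's theorem together with the order--dimension bounds of \autoref{Satz: o(c), dim V and oG}. For i), since $Y$ extends $V$ we have $\dim Y = \dim V$, and induction multiplies dimension by the index, so $\dim V' = |G:J| \cdot \dim V \cdot \dim W$; reading off prime divisors of both sides yields the formula. Part ii) follows immediately by letting $W$ range over $\Irr(J/N|b)$ and taking the union. For iii), by \autoref{Satz: o(c), dim V and oG} we have $\dim V \mid |N|$ for every $V \in \Irr(N|\res{c}{N})$, so $\Pi(\Irr(N|c)) \subseteq \Pi(N)$; on the other hand such a $V$ is a constituent of $\res{V'}{N}$ for the $V'$ produced by Clifford's construction, and part i) gives $\Pi(V) \subseteq \Pi(V') \subseteq \Pi(\Irr(G|c))$.

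Part iv) is the main technical step. Set $\pi = \Pi(N)$, and recall that $c = \hat c \cdot \inf{b}{}$ in $H^2(J)$ with $b \in H^2(J/N)$. From $o(\hat c) \mid \dim Y = \dim V \mid |N|$ we get $\Pi(\hat c) \subseteq \pi$, and the hypothesis $\Pi(c) \subseteq \pi$ then forces $\Pi(\inf{b}{}) \subseteq \pi$ as well, since $\inf{b}{} = c \cdot \hat c^{-1}$ in the abelian group $H^2(J)$. On the other hand, $o(b)^2 \mid |J/N|$ by \autoref{Satz: o(c), dim V and oG} applied inside $H^2(J/N)$, and the coprimality hypothesis gives $\Pi(J/N) \subseteq \pi'$, so $\Pi(\inf{b}{}) \subseteq \Pi(b) \subseteq \pi'$. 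Combining the two inclusions, $\inf{b}{} = 1$ in $H^2(J)$. To conclude $b = 1$ in $H^2(J/N)$, I would invoke Schur--Zassenhaus: the coprimality produces a complement $K \leq J$ with $K \cong J/N$, and restricting $\inf{b}{}$ to $K$ recovers $b$ through this isomorphism, forcing $b = 1$. Consequently $\hat c = c$, so $Y \in \Irr(J|c)$ and $W \in \Irr(J/N)$ as claimed.

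Finally, part v) is short. By i), $\Pi(|G:J|) \subseteq \Pi(V') \subseteq \pi'$, so $|J|_\pi = |G|_\pi$. Since subgroups of a $\pi$-separable group are $\pi$-separable, $J$ admits a Hall $\pi$-subgroup, whose order is $|G|_\pi$ and which is therefore a Hall $\pi$-subgroup of $G$ as well. The main obstacle I foresee is the verification in iv) that $\inf{b}{} = 1$ already forces $b = 1$; every other step is essentially bookkeeping with the dimension formula for induction and the order bounds of \autoref{Satz: o(c), dim V and oG}.
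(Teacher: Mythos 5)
Your proposal is correct and follows essentially the same route as the paper: parts i)--iii) and v) are the same dimension/index bookkeeping, and in iv) you use the same two ingredients (the order--degree bounds of \autoref{Satz: o(c), dim V and oG} giving $\Pi(\hat c)\subseteq\Pi(N)$, and a Schur--Zassenhaus complement on which $\res{}{}\circ\inf{}{}$ is injective to pass from $\inf{b}{}=1$ to $b=1$). The only cosmetic difference is that you first kill $\inf{b}{}$ globally in $H^2J$ by comparing $\pi$- and $\pi'$-orders, whereas the paper kills its restriction to the Hall $\pi'$-complement directly; the step you flagged as a possible obstacle is handled exactly as the paper handles it.
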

\begin{proof}
$i)$ Simply $\dim(V')=\dim(V)\cdot\dim(W)\cdot|G:J|$.
$ii)$ It follows since $\Irr(G|c,V)=\ind{(Y\otimes\Irr(J/N|b))}{^cG}$.
$iii)$ It follows since $\Irr(G|c)$ is the union of the sets $\Irr(G|c,V)$ for $V\in\Irr(N|c)$.
$iv)$ Let $\pi=\Pi(N)$.
By the Schur--Zassenhaus Theorem, there is a Hall $\pi'$-subgroup $H'$ in $J$.
By \autoref{Satz: H2Gp and H2H}, the composition of $\inf{}{}:H^2(J/N)\to(H^2G)_{\pi'}$ with $\res{}{H'}:(H^2G)_{\pi'}\to H^2H'$ is injective.
We have that $\res{\inf{b}{}}{H'}=\res{(c\cdot\inv{\hat{c}})}{H'}$, since $\Pi(c)\subseteq\pi$ and, by \autoref{Satz: pic piIrrGc piG}, $\Pi(\hat{c})\subseteq\Pi(Y)=\Pi(V)\subseteq\pi$, then $\res{c}{H'}=\res{\hat{c}}{H'}=1$, proving that $\res{\inf{b}{}}{H'}=1$,
that is $b=1$ since $J/N\simeq H'$.
$v)$ The condition that $\Pi(|G:J|)\subseteq\pi'$ is the same of having a Hall $\pi$-subgroup of $G$ contained in $J$.
\end{proof}
The first consequence of this lemma is that the primes dividing the irreducible degrees control the abelianity of the $\pi$-factors of $G$, that is to say the quotient groups $M/N$ where $N,M\trianglelefteq G$ with $N\leq M$ and $\Pi(M/N)\subseteq\pi$.
\begin{cor}\label{Satz: M/N pi-abelian}
Let $M/N$ be a $\pi$-factor of $G$.
If there is a coclass $c$ in $H^2G$, and an irreducible $\res{c}{N}$-module $V$ such that $\Pi(\Irr(G|c,V))\subseteq\pi'$, then $M/N$ is abelian.
\end{cor}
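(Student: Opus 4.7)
The plan is to pass to the inertia subgroup $J=\IG{G}{c}{V}$, translate the hypothesis via Clifford's theory into a statement about projective irreducible modules of $M/N$ seen as a normal subgroup of $J/N$, and then obtain abelianity by a Wedderburn count.

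First I would apply \autoref{Satz: Clifford}.II to obtain coclasses $\hat c\in H^2G$ and $b\in H^2(J/N)$ together with an extension $Y\in\Irr(J|\hat c)$ of $V$, so that $W\mapsto\ind{(Y\otimes W)}{^cG}$ gives a bijection $\Irr(J/N|b)\to\Irr(G|c,V)$. The degree bookkeeping of \autoref{Satz: Clifford prime and coprime}.(i)--(ii) then converts the assumption $\Pi(\Irr(G|c,V))\subseteq\pi'$ into the two pieces of information $\Pi(|G:J|)\subseteq\pi'$ and $\Pi(\Irr(J/N|b))\subseteq\pi'$.

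Next I would use the first piece to prove $M\leq J$: since $N\leq M\cap J\leq M$, the index $|M:M\cap J|$ divides both $|G:J|$, a $\pi'$-number, and $|M:N|$, a $\pi$-number (because $M/N$ is a $\pi$-factor of $G$), and is therefore equal to $1$. Thus $M/N\trianglelefteq J/N$ and the restriction $\res{b}{M/N}\in H^2(M/N)$ is well-defined. Applying \autoref{Satz: Clifford prime and coprime}.iii to the normal inclusion $M/N\trianglelefteq J/N$ with coclass $b$ yields
\[\Pi(\Irr(M/N|\res{b}{M/N}))\subseteq\Pi(M/N)\cap\Pi(\Irr(J/N|b))\subseteq\pi\cap\pi'=\varnothing,\]
so every irreducible $\res{b}{M/N}$-module of $M/N$ has degree $1$.

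To conclude I invoke Wedderburn for the twisted group algebra $\C^{\res{b}{M/N}}(M/N)$: the degree formula now forces $|M/N|$ to equal $|\Irr(M/N|\res{b}{M/N})|=|\Cl(M/N|\res{b}{M/N})|$, which is bounded above by the total number of conjugacy classes of $M/N$, itself at most $|M/N|$. Equality throughout means every conjugacy class of $M/N$ is a singleton, i.e.\ $M/N$ is abelian. The only step with any subtlety is the containment $M\leq J$, which hinges on noticing that $|G:J|$ must be a $\pi'$-number; after that the argument is automatic.
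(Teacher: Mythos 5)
Your proposal is correct and follows essentially the same route as the paper: pass to $J=\IG{G}{c}{V}$, use \autoref{Satz: Clifford prime and coprime}(ii) to get $\Pi(|G:J|)\subseteq\pi'$ and $\Pi(\Irr(J/N|b))\subseteq\pi'$, deduce $M\leq J$ from the index being simultaneously a $\pi$- and $\pi'$-number, and then kill all irreducible degrees of $M/N$ via part (iii). The only (immaterial) difference is the last step, where the paper first concludes $\res{b}{M/N}=1$ and then quotes the ordinary characterization of abelianity, while you run the Wedderburn degree count together with $|\Irr|=|\Cl|$ directly on the twisted algebra; both are valid.
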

\begin{proof}
By $ii)$ we have that $\Pi(\Irr(J/N|b))\cup\Pi(|G:J|)\subseteq\pi'$
for $J=\IG{G}{c}{V}$.
Since $N\leq M\cap J$ and $|M:M\cap J|=|MJ:J|$, then \[\Pi(|M:M\cap J|)\subseteq\Pi(M/N)\cap\Pi(|G:J|)=\varnothing\] and so $M\leq J$.
Since $M\trianglelefteq G$, then $M/N\trianglelefteq J/N$ and by $iii)$ we have that \[\Pi(\Irr(M/N|b))\subseteq\Pi(M/N)\cap\Pi(\Irr(J/N|b))=\varnothing\!~.\]
Therefore $\res{b}{M/N}=1$ and $\Pi(\Irr(M/N))=\varnothing$, that occurs exactly when $M/N$ is abelian.
\end{proof}
Thus, we have a generalization of the Itô--Michler Theorem for a particular class of $\pi$-separable groups.
\begin{thm}\label{Satz: Itô Michler projective pi-sep}
Let $G=O_{\pi'\pi\pi'}(G)$, and $H$ be a Hall $\pi$-subgroup of $G$.
Then the following conditions are equivalent:
\begin{enumerate}[i)]
  \item $\Pi(\Irr(G|c))\subseteq\pi'$
  \item $H$ is abelian, $\res{c}{H}=1$, and $H\leq\IG{G}{c}{V}$ for every $V\in\Irr(O_{\pi'}(G)|c)$
  \item $H$ is abelian, $\res{c}{H}=1$, and
  $H\leq G_{\tau}$ for every $\tau\in\Cl(O_{\pi'}(G)|c)$
\end{enumerate}
\end{thm}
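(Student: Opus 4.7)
The plan is to begin by unpacking the structural hypothesis $G=O_{\pi'\pi\pi'}(G)$. With $N:=O_{\pi'}(G)$ and $M:=O_{\pi'\pi}(G)$, the condition forces $G/M$ to be a $\pi'$-group, so $M/N$ is the unique normal Hall $\pi$-subgroup of $G/N$. Since $H\cap N=1$ by coprimality of orders and $|HN/N|=|H|=|G|_\pi$, one finds $HN=M$ and $H\simeq M/N$; thus the conditions on $H$ in $ii)$ and $iii)$ translate to conditions on the $\pi$-factor $M/N$.

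For $i)\Rightarrow ii)$ I would proceed in three steps. From $\Pi(c)\subseteq\Pi(\Irr(G|c))\subseteq\pi'$, \autoref{Satz: pic piIrrGc piG} yields $\res{c}{H}=1$. Picking any $V\in\Irr(N|c)$, the hypothesis gives $\Pi(\Irr(G|c,V))\subseteq\pi'$, so \autoref{Satz: M/N pi-abelian} applied to the $\pi$-factor $M/N$ shows $H\simeq M/N$ is abelian. Finally, for every $V\in\Irr(N|c)$ and any $X\in\Irr(G|c,V)$, \autoref{Satz: Clifford prime and coprime}.v) forces $J=\IG{G}{c}{V}$ to contain a Hall $\pi$-subgroup, so $|G:J|\in\pi'$; the $H$-orbits on the $G$-orbit $V^G$ have $\pi$-sizes summing to the $\pi'$-number $|V^G|=|G:J|$, so they are singletons and $H\leq J$. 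The equivalence $ii)\Leftrightarrow iii)$ I would then obtain from the projective Brauer permutation lemma applied to the abelian group $H$ acting on $\Irr(N|c)$ and $\Cl(N|c)$: the two sets have the same cardinality and the same number of $H$-fixed points, so one is fully fixed if and only if the other is.

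The main step is $ii)\Rightarrow i)$. Given $X\in\Irr(G|c)$, \autoref{Satz: Clifford} writes $X=\ind{(Y\otimes W)}{^cG}$ for a constituent $V$ of $\res{X}{N}$, $J=\IG{G}{c}{V}\supseteq H$, $Y\in\Irr(J|\hat c)$ extending $V$, and $W\in\Irr(J/N|b)$ with $c=\hat c\cdot\inf{b}{}$. By \autoref{Satz: Clifford prime and coprime}.i), since $\dim V\in\pi'$ (as $|N|\in\pi'$) and $|G:J|\in\pi'$ (as $H\leq J$), it suffices to show $\dim W\in\pi'$. Since $M=HN\leq J$, the group $\bar M:=M/N\simeq H$ is an abelian normal Hall $\pi$-subgroup of $\bar J:=J/N$ with $\pi'$-quotient $\bar J/\bar M$. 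The key cohomological step is $\res{b}{\bar M}=1$: \autoref{Satz: o(c), dim V and oG} gives $o(\hat c)\mid\dim V$, a $\pi'$-number, while the same theorem also gives $o(\res{\hat c}{H})^2\mid|H|$, a $\pi$-number, forcing $\res{\hat c}{H}=1$; combining with $\res{c}{H}=1$ and the isomorphism $H\simeq\bar M$ yields $\res{b}{\bar M}=1$. A second Clifford decomposition of $W$ over $\bar M\trianglelefteq\bar J$ then expresses $\dim W$ as a product of a divisor of $|\bar J:\bar M|$ and the dimension of a projective irreducible module of a subgroup of the $\pi'$-group $\bar J/\bar M$, both $\pi'$-numbers by \autoref{Satz: o(c), dim V and oG}. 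I expect the main obstacle to be precisely this cohomological computation: one must simultaneously track $b\in H^2(\bar J)$ and iterate Clifford theory, killing $\res{b}{\bar M}$ via the order bounds of \autoref{Satz: o(c), dim V and oG}, an effect with no counterpart in the ordinary case.
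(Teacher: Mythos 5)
Your overall strategy coincides with the paper's own proof: the same two-fold application of Clifford theory for $ii)\Rightarrow i)$ (first over $N=O_{\pi'}(G)$, then over the abelian normal Hall $\pi$-subgroup $NH/N$ of the inertia quotient), the same use of \autoref{Satz: pic piIrrGc piG} and \autoref{Satz: M/N pi-abelian} for $i)\Rightarrow ii)$, and the same mechanism for $ii)\Leftrightarrow iii)$ (the paper phrases it as $C_H(Z(\C^cN))=H$, you as a projective Brauer permutation lemma on $\Irr(N|c)$ versus $\Cl(N|c)$ --- these are the same observation about the two bases of the centre of $\C^cN$). Your derivation of $\res{b}{M/N}=1$ from the order bounds of \autoref{Satz: o(c), dim V and oG} ($o(\hat c)$ divides the $\pi'$-number $\dim V$ while $o(\res{\hat{c}}{H})^2$ divides the $\pi$-number $|H|$) is a legitimate variant of the paper's appeal to \autoref{Satz: Clifford prime and coprime}, part iv), and arguably more self-contained.

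There is, however, one step that does not hold as written. In $i)\Rightarrow ii)$ you deduce $H\leq J=\IG{G}{c}{V}$ from the fact that the $H$-orbits on $V^G$ have sizes which are $\pi$-numbers summing to the $\pi'$-number $|G:J|$, ``so they are singletons.'' A family of $\pi$-numbers can sum to a $\pi'$-number without each summand being $1$ (already for $\pi=\{2\}$ one has $1+2=3$), so this counting argument only produces at least one $H$-fixed point in each $G$-orbit, not that every point is fixed. The correct route --- the one the paper takes --- exploits the normality of $M=NH=O_{\pi'\pi}(G)$: since $G/M$ is a $\pi'$-group, every Hall $\pi$-subgroup $\tilde H$ of $G$ satisfies $N\tilde H=M$; by \autoref{Satz: Clifford prime and coprime}, part v), $J$ contains some $\tilde H$, and since $J$ also contains $N$ it contains $N\tilde H=M\geq H$. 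You set up the identity $HN=M$ at the start, so the repair is immediate, but as stated the deduction is invalid.
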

\begin{proof}
Let $N=O_{\pi'}(G)$.
Since $G=O_{\pi'\pi\pi'}(G)$, then $O_{\pi'\pi}(G)=NH$.
$i)\Rightarrow ii)$
\autoref{Satz: o(c), dim V and oG} yields $\res{c}{H}=1$.
By \autoref{Satz: Clifford prime and coprime}, for every $V\in\Irr(N|c)$ there is a Hall $\pi$-subgroup $\tilde H$ of $G$ contained in $J=G_{c,V}$. Since $NH=O_{\pi'\pi}(G)$, then $N\tilde H=NH$ so that $H\leq J$.
Moreover, $H$ is isomorphic to the $\pi$-factor $NH/N$ which is abelian by \autoref{Satz: M/N pi-abelian}.
$ii)\Rightarrow i)$
For any $V'\in\Irr(G|c)$, we have to show that $\Pi(V')\subseteq\pi'$.
We apply twice \autoref{Satz: Clifford}.
First, we choose a constituent $V_1$ of $\res{V'}{N}$, and denote $J_1=\IG{G}{c}{V_1}$, so that there are $Y_1\in\Irr(J_1|\hat{c})$ and $W_1\in\Irr(J_1/N|b_1)$ such that $V'=\ind{(Y_1\otimes W_1)}{^cG}$ and $\res{Y_1}{N}=V_1$.
By hypothesis $H\leq J_1$ and $NH\trianglelefteq G$, thus we can choose a constituent $V_2$ of $\res{W_1}{NH}$ and denote $J_2=\IG{J_1}{,b_1}{V_2}$, so that there are $Y_2\in\Irr(J_2|\hat{b}_1)$ and $W_2\in\Irr(J_2/NH|b_2)$ such that $W_1=\ind{(Y_2\otimes W_2)}{^bG}$ and $\res{Y_2}{NP}=V_2$.
Thus $\Pi(V')=\Pi(V_1)\cup\Pi(V_2)\cup\Pi(W_2)\cup\Pi(|G:J_2|)$.
Regarding $W_2$ as a module of $J_1$, and applying \autoref{Satz: Clifford prime and coprime} to $NH$, we have that $\res{b}{H}=1$ and so $\res{W_1}{H}$ is an ordinary module.
Since $H$ is abelian and $V_2$ is an ordinary module, then $\dim(V_2)=1$.
On the other hand, since $N$ and $G/NH$ are $\pi'$-groups, then $\pi(V_1)\cup\Pi(W_2)\cup\Pi(|G:J_2|)\subseteq\pi'$. Thus $p\notin\Pi(V')$ as desired.
$ii)\iff iii)$ Both conditions are equivalent to the statement $C_H(Z(\C^cN))=H$.
\end{proof}
The Gluck--Wolf's Theorem relies on similar ideas, when they aim to describe the structure of a counterexample (see \cite[Proposition 0]{GluckWolf}).
Also, their result extends the above characterization to all $p$-solvable groups.
\begin{thm}\label{Satz: Itô Michler projective p-sol}
Let $G$ be $p$-solvable, $P$ be a Sylow $p$-subgroup of $G$, and $c$ be a coclass in $H^2G$. Then the following conditions are equivalent:
\begin{enumerate}[i)]
  \item $p\notin\Pi(\Irr(G|c))$
  \item $P$ is abelian, $\res{c}{P}=1$, and $P\leq\IG{G}{c}{V}$ for every $V\in\Irr(O_{p'}(G)|c)$
  \item $P$ is abelian, $\res{c}{P}=1$, and
  $P\leq G_{\tau}$ for every $\tau\in\Cl(O_{p'}(G)|c)$
\end{enumerate}
\end{thm}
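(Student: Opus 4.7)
The plan is to handle the three implications separately, using the previous theorem, Clifford theory (\autoref{Satz: Clifford}), and the projective Itô--Michler theorem (Gluck--Wolf, Navarro--Tiep) as main inputs.

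For $(i)\Rightarrow(ii)$: The relation $\res{c}{P}=1$ is immediate from \autoref{Satz: pic piIrrGc piG}, and the abelianity of $P$ is precisely the projective Itô--Michler theorem. For the inertia condition, fix $V\in\Irr(N|c)$ with $N=O_{p'}(G)$, and pick any $V'\in\Irr(G|c,V)$. By \autoref{Satz: Clifford prime and coprime} $v)$ applied with $\pi=\{p\}$, the inertia subgroup $J=\IG{G}{c}{V}$ contains some Sylow $p$-subgroup of $G$, so the $G$-orbit of $V$ in $\Irr(N|c)$ has $p'$-cardinality $|G:J|$. Since this orbit decomposes into $P$-orbits of $p$-power size, each is a singleton, so $P$ fixes every $G$-conjugate of $V$. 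Running $V$ over a transversal of $G$-orbits yields $P\leq\IG{G}{c}{V}$ for every $V\in\Irr(N|c)$.

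For $(ii)\iff(iii)$: The group $G$ acts on $Z(\C^cN)$ by algebra automorphisms, and this action simultaneously permutes the basis of class sums $\{\widehat\tau\}_{\tau\in\Cl(N|c)}$ and the basis of primitive central idempotents $\{e_V\}_{V\in\Irr(N|c)}$. Fixing one basis pointwise is equivalent to centralizing $Z(\C^cN)$, hence to fixing the other basis pointwise. Both conditions $(ii)$ and $(iii)$ are thus translations of $C_P(Z(\C^cN))=P$.

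For $(ii)\Rightarrow(i)$, the plan is induction on $|G|$, with base case $G=O_{p'pp'}(G)$ handled by \autoref{Satz: Itô Michler projective pi-sep}. For the inductive step take $V'\in\Irr(G|c)$ and apply Clifford to $N$: let $V$ be a constituent of $\res{V'}{N}$, which is $P$-invariant by hypothesis, so that $J=\IG{G}{c}{V}\supseteq NP$, and write $V'=\ind{(\hat{V}\otimes W)}{^cG}$ with $\hat V\in\Irr(J|\hat c)$ extending $V$, $W\in\Irr(J/N|b)$, and $\hat c\cdot\inf{b}{}=c$. Because $\dim V\mid|N|$ and $|G:J|$ divides $|G:NP|$, the factorisation $\dim V'=\dim V\cdot\dim W\cdot|G:J|$ reduces the task to showing $p\nmid\dim W$.

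The key step --- and the main obstacle --- is to verify that the triple $(J/N,\,b,\,NP/N)$ satisfies the hypotheses of $(ii)$ for the smaller $p$-solvable group $J/N$. The Sylow $p$-subgroup $NP/N$ is isomorphic to $P$, hence abelian; the relation $\res{c}{P}=1$ combined with $\hat c\cdot\inf{b}{}=c$ and $o(\hat c)\mid\dim V$ (\autoref{Satz: o(c), dim V and oG}), which forces $\Pi(\res{\hat c}{P})\subseteq p'\cap\{p\}=\varnothing$, yields $\res{b}{NP/N}=1$; and the $NP/N$-invariance of every module in $\Irr(O_{p'}(J/N)|b)$ is traced back to the original $P$-invariance hypothesis by reapplying Clifford to pairs $(V_1,V)$ with $V_1\in\Irr(O_{p'}(J/N)|b)$, using that $O_{p'}(J/N)$ lifts to a normal $p'$-by-$N$ extension inside $J$ whose $c$-irreducibles have inertia in $G$ containing $P$. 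When $J<G$ this lets us invoke the inductive hypothesis on $J/N$; when $J=G$ we have $V$ $G$-invariant and $O_{p'}(G/N)=1$, so an induction on the length of the $p'$-series of $G/N$ (the remaining factors are shorter) finishes the argument by reducing to the base case.
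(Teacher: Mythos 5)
Your proposal misses the single ingredient that makes the paper's proof a two-line reduction, and the two places where you improvise around it both contain genuine gaps. The paper's argument is: under $(i)$ the Sylow subgroup $P$ is abelian by Gluck--Wolf, under $(ii)$ or $(iii)$ it is abelian by hypothesis; in either case Hall--Higman (\autoref{Satz: Hall Higman}) forces $G=O_{p'pp'}(G)$, because the image of the abelian $P$ in $G/O_{p'}(G)$ centralizes $O_p(G/O_{p'}(G))$ and is therefore contained in it. Hence the ``base case'' $G=O_{p'pp'}(G)$ is in fact the general case, and the theorem is literally \autoref{Satz: Itô Michler projective pi-sep} with $\pi=\{p\}$. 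You never invoke \autoref{Satz: Hall Higman}, and this is why you are driven into an induction that does not close.

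Concretely: in your $(i)\Rightarrow(ii)$, from the fact that $J=\IG{G}{c}{V}$ contains a Sylow $p$-subgroup you conclude that the $G$-orbit of $V$ has $p'$-cardinality and then assert that, since it splits into $P$-orbits of $p$-power size, ``each is a singleton.'' That inference is false: a sum of $p$-powers equal to a $p'$-number only guarantees at least one summand equal to $1$ (the fixed points are congruent to the total mod $p$), not that all are. To get $P\leq\IG{G}{c}{V}$ for \emph{every} $V$ you need every Sylow $p$-subgroup to normalize the same subgroup $NP=O_{p'p}(G)$, i.e.\ exactly the normality of $NP$ supplied by Hall--Higman, as in the proof of \autoref{Satz: Itô Michler projective pi-sep}. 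In your $(ii)\Rightarrow(i)$, the inductive step requires the irreducible $b$-modules of $O_{p'}(J/N)$ to be $NP/N$-invariant, and you claim this is ``traced back'' to hypothesis $(ii)$; but $(ii)$ only constrains $\Irr(O_{p'}(G)|c)$, while the preimage of $O_{p'}(J/N)$ in $J$ is a strictly larger $p'$-group that need not even be normal in $G$, so no such transfer is available. Both difficulties evaporate once Hall--Higman is applied first: with $G=O_{p'pp'}(G)$ the quotient $G/NP$ is a $p'$-group and the two-step Clifford decomposition of \autoref{Satz: Itô Michler projective pi-sep} already controls all degrees. Your treatment of $(ii)\iff(iii)$ via $C_P(Z(\C^cN))=P$ does agree with the paper.
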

\begin{proof}
The statements are those of \autoref{Satz: Itô Michler projective pi-sep} with $\pi=\{p\}$.
On one hand, if $G$ is $p$-solvable and  $P$ is abelian, then $G=O_{p'pp'}(G)$ by Hall--Higman \autoref{Satz: Hall Higman}.
On the other hand, if $i)$ is satisfied, then $P$ is abelian by the Gluck--Wolf's Theorem.
\end{proof}
This is a true generalization of the Itô--Michler Theorem for $p$-Solvable groups, since for the trivial coclass the condition $iii)$ is equivalent to say that $P$ is normal abelian.
Indeed, this is true for a broader family of coclasses:
\begin{cor}\label{Satz: pSol all classes are c-regular}
Let $G$ be $p$-solvable and assume that the conjugacy classes of $O_{p'}(G)$ are $c$-regular.
Then $p$ does not divide the irreducible $c$-degrees of $G$ if and only if $G$ has a normal abelian Sylow $p$-subgroup $P$ and the restriction of $c$ to $P$ is trivial.
\end{cor}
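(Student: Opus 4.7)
The plan is to deduce the corollary from Theorem \ref{Satz: Itô Michler projective p-sol}: when every conjugacy class of $N=O_{p'}(G)$ is $c$-regular, condition $iii)$ of that theorem reads ``$P$ fixes every $N$-conjugacy class setwise'', and the task is to strengthen this, together with the abelianity of $P$, to the normality of $P$ in $G$.

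The backward implication is straightforward. If $P\trianglelefteq G$ is abelian with $\res{c}{P}=1$, then $P$ and $N$ are normal subgroups of coprime order, so $[P,N]\leq P\cap N=1$ and $P$ centralizes $N$ pointwise. Hence $P\leq G_\tau$ for every $\tau\in\Cl(N)$, in particular for every $c$-regular class, and Theorem \ref{Satz: Itô Michler projective p-sol} in the direction $iii)\Rightarrow i)$ gives the conclusion. Note that the $c$-regularity hypothesis plays no role in this direction.

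For the forward direction, Theorem \ref{Satz: Itô Michler projective p-sol} furnishes $P$ abelian with $\res{c}{P}=1$, and combined with our hypothesis tells us that $P$ stabilizes every $N$-conjugacy class setwise. The crux is upgrading this set-wise stabilization to pointwise centralization, i.e.\ $[P,N]=1$. Since $P$ acts coprimely on $N$, Brauer's permutation lemma forces $P$ to fix every $\chi\in\Irr(N)$, and the Glauberman--Isaacs correspondence (applicable because $P$ is solvable) then produces a degree-preserving bijection $\Irr(N)\to\Irr(C_N(P))$; comparing $\sum\chi(1)^2=|N|$ with the analogous identity for $C_N(P)$ yields $C_N(P)=N$. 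This coprime-action step is the main obstacle, and it is precisely where the assumption that every $N$-class is $c$-regular is consumed.

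Once $[P,N]=1$ is in hand, normality follows by a soft argument in $\bar G=G/N$, where $O_{p'}(\bar G)=1$. Lemma \ref{Satz: Hall Higman} gives $C_{\bar G}(O_p(\bar G))\leq O_p(\bar G)$; but $\bar P$ is abelian and contains $O_p(\bar G)$, hence centralizes it, so $\bar P=O_p(\bar G)\trianglelefteq\bar G$ and $PN\trianglelefteq G$. Combined with $[P,N]=1$ and $P\cap N=1$, this gives $PN=P\times N$, with $P$ its unique, hence characteristic, Sylow $p$-subgroup, and therefore $P\trianglelefteq G$.
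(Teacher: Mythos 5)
Your overall strategy is sound: the backward direction is correct, and the final normality argument (Hall--Higman in $G/N$ to get $\bar P=O_p(\bar G)$, then $P$ characteristic in $PN=P\times N$) works; the paper instead obtains normality by feeding the $P$-invariance of $\Irr(N)$ into the ordinary It\^{o}--Michler theorem for the subgroup $NP=O_{p'p}(G)\trianglelefteq G$. The genuine problem is the step where you upgrade ``$P$ fixes every class of $N$'' to $[P,N]=1$: the Glauberman--Isaacs correspondence is \emph{not} degree-preserving, so comparing $\sum\chi(1)^2=|N|$ with the analogous sum over $\Irr(C_N(P))$ does not go through. For instance, let $C_3$ act faithfully and coprimely on $Q_8$: the correspondence sends the invariant character of degree $2$ to a linear character of $C_{Q_8}(C_3)=Z(Q_8)$. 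What the correspondence does give you is only $|\Irr(N)|=|\Irr(C_N(P))|$, and equality of class numbers does not by itself force a subgroup to be the whole group. Since this centralization step is exactly the crux of the forward direction, as written the proof has a gap.

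The conclusion you are after is nevertheless true, and there is a one-line repair that stays entirely at the level of classes and never mentions characters: for each conjugacy class $K$ of $N$, the group $N$ acts transitively on $K$ by conjugation, $P$ stabilizes $K$, and the action is coprime, so Glauberman's fixed-point lemma produces a $P$-fixed element in $K$. Hence every class of $N$ meets $C_N(P)$, i.e.\ $N=\bigcup_{n\in N}C_N(P)^n$, which forces $C_N(P)=N$ because no group is the union of the conjugates of a proper subgroup. Alternatively, follow the paper's route: Brauer's permutation lemma gives that all of $\Irr(N)$ is $P$-invariant, and since $P$ is abelian every irreducible character of $NP$ then has $p'$-degree, so the ordinary It\^{o}--Michler theorem applied to $NP$ yields $P\trianglelefteq NP$ and hence $P\trianglelefteq G$; this bypasses the claim $[P,N]=1$ altogether.
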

\begin{proof}
By \autoref{Satz: Itô Michler projective p-sol}, we have that $p\notin\Pi(\Irr(G|c))$ if and only if $G$ has an abelian Sylow $p$-subgroup $P$, $\res{c}{P}=1$, and $P$ fixes $\Cl(O_{p'}(G)|c)$.
However, the additional hypothesis is that $\Cl(O_{p'}(G)|c)=\Cl(O_{p'}(G))$, and so $P$ fixes $\Cl(O_{p'}(G))$ if and only if it fixes $\Irr(O_{p'}(G))$.
Since $P$ is abelian, this is equivalent to say that $p\notin\pi(\Irr(O_{p'}(G)P))$ and, by the Itô--Michler's theorem, to say that $P\trianglelefteq O_{p'}(G)P$, that is $P\trianglelefteq G$.
\end{proof}
For $\pi$-separable groups, joining Gluck--Wolf's Theorem and \autoref{Satz: M/N pi-abelian} gives the following result, including the abelianity of all the $\pi$-factors.
\begin{thm}\label{Satz: pi-sep abelian factors and Sylow subgroups}
Let $G$ be $\pi$-separable and $c\in H^2G$.
Then $\Pi(\Irr(G|c))\subseteq\pi'$ if and only if $G$ is $p$-solvable and it satisfies the conditions of \autoref{Satz: Itô Michler projective p-sol} for all prime $p$ in $\pi$.
In this case every $\pi$-factor of $G$ is abelian, and $\res{c}{H}=1$ for every Hall $\pi$-subgroup $H$ of $G$.
\end{thm}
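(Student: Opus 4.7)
The plan is to reduce everything to the $p$-solvable case already treated in \autoref{Satz: Itô Michler projective p-sol}, using \autoref{Satz: M/N pi-abelian} as the bridge. The backward direction is essentially free: condition $i)$ of \autoref{Satz: Itô Michler projective p-sol} gives $p \notin \Pi(\Irr(G|c))$ for each $p \in \pi$, so taking the union yields $\Pi(\Irr(G|c)) \cap \pi = \varnothing$, which is exactly $\Pi(\Irr(G|c)) \subseteq \pi'$.

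For the forward direction, I would first establish that every $\pi$-factor $M/N$ of $G$ is abelian. Pick any $W \in \Irr(G|c)$ and any constituent $V$ of $\res{W}{N}$, so that $W \in \Irr(G|c,V)$; the inclusion $\Irr(G|c,V) \subseteq \Irr(G|c)$ together with the hypothesis gives $\Pi(\Irr(G|c,V)) \subseteq \pi'$, and \autoref{Satz: M/N pi-abelian} forces $M/N$ to be abelian. Applying this along the $\pi$-series $O_{\pi\ast}(G)$ and then refining each abelian $\pi$-factor via its Sylow decomposition (which is characteristic, hence normal in the ambient quotient) produces, for each $p \in \pi$, a normal series whose factors have $p$-power order or order coprime to $p$; in other words, $G$ is $p$-solvable for every $p \in \pi$. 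At this point \autoref{Satz: Itô Michler projective p-sol}, with $p \notin \Pi(\Irr(G|c))$, yields conditions $ii)$ and $iii)$ for every prime $p \in \pi$.

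The moreover clause is then immediate: the abelianity of every $\pi$-factor was the first step of the forward direction, and $\res{c}{H} = 1$ for a Hall $\pi$-subgroup $H$ follows directly from \autoref{Satz: pic piIrrGc piG}, since $\Pi(c) \subseteq \Pi(\Irr(G|c)) \subseteq \pi'$. The only nontrivial point I anticipate is the upgrade from $\pi$-separability with abelian $\pi$-factors to $p$-solvability: once this is justified by the Sylow refinement just described, the rest of the proof is a straight invocation of the $p$-solvable case and of \autoref{Satz: M/N pi-abelian}.
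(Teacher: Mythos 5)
Your proof is correct and matches the paper's argument: both directions reduce to \autoref{Satz: Itô Michler projective p-sol}, with \autoref{Satz: M/N pi-abelian} supplying the abelianity of the $\pi$-factors and hence the $p$-solvability needed in the forward direction. The paper's version is terser---it does not spell out the Sylow refinement of the abelian $\pi$-factors into a $p$-series, nor the derivation of $\res{c}{H}=1$ from \autoref{Satz: pic piIrrGc piG}---but these are exactly the details you supply, so the two arguments are essentially the same.
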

\begin{proof}
Obviously, if $G$ is $p$-solvable for every prime $p$ in $\pi$, then by Gluck--Wolf's Theorem we have that $\Pi(\Irr(G|c)\subseteq\pi'$ if and only if $G$ satisfies the conditions of \autoref{Satz: Itô Michler projective p-sol} for all $p$ in $\pi$.
On the other hand, if $\Pi(\Irr(G|c)\subseteq\pi'$, then by \autoref{Satz: M/N pi-abelian} every $\pi$-factor of $G$ is abelian, so that $G$ is $p$-solvable for every  prime $p$ in $\pi$.
\end{proof}
To prove \autoref{Satz: Itô Michler projective pi-sep} we applied \autoref{Satz: Clifford} twice, so we apply it recursively to obtain the following decomposition theorem.
\begin{thm}\label{Satz: n-decomposition}
Let $N_\ast:1=N_0\leq\ldots\leq N_{i-1}\leq N_{i}\leq\ldots\leq N_l=G$ be a normal series, and $V$ be an irreducible $c$-module of $G$.
Then we have a subgroup $J\leq G$ and a decomposition $V=\ind{(Y_1\otimes Y_2\otimes\ldots\otimes Y_l)}{^cG}$ where each $Y_i$ is an irreducible $c_i$-module of $J/J\cap N_{i-1}$, with the property that $V_i=\res{Y_i}{J\cap N_i}$ is irreducible, for some coclass $c_i$.
\end{thm}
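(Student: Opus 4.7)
My plan is to induct on the length $l$ of the normal series $N_\ast$, iteratively shrinking the ambient group via Clifford's theorem. The case $l=1$ is trivial, with $J=G$ and $Y_1=V$.

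For the inductive step I apply \autoref{Satz: Clifford} to $N_1 \trianglelefteq G$. Picking an irreducible constituent $V_1$ of $\res{V}{N_1}$ and setting $K_1 = \IG{G}{c}{V_1}$, I obtain an extension $\tilde Y_1 \in \Irr(K_1|\hat c_1)$ of $V_1$ and $\tilde W_1 \in \Irr(K_1/N_1|b_1)$ with
\begin{equation*}
V = \ind{(\tilde Y_1 \otimes \tilde W_1)}{^cG} \qquad \text{and} \qquad \hat c_1 \cdot \inf{b_1}{} = \res{c}{K_1}.
\end{equation*}
Inside $K_1/N_1$ the subgroups $(N_i \cap K_1)/N_1$ for $i=1,\ldots,l$ form a normal series of length $l-1$, so I apply the inductive hypothesis to the irreducible $b_1$-module $\tilde W_1$. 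This yields a subgroup $\bar J \leq K_1/N_1$ together with irreducible modules $\bar Y_1,\ldots,\bar Y_{l-1}$. Taking $J \leq K_1$ to be the preimage of $\bar J$ (so $N_1 \leq J$ and $\bar J = J/N_1$), a direct check using $N_1 \leq J$ gives $\bar J \cap (N_i \cap K_1)/N_1 = (J \cap N_i)/N_1$, so the relabeling $Y_{j+1} := \bar Y_j$ turns each $\bar Y_j$ into an irreducible module of $J/(J \cap N_j)$ whose restriction to $(J \cap N_{j+1})/(J \cap N_j)$ is irreducible, matching the required form.

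To produce $Y_1$, set $Y_1 := \res{\tilde Y_1}{J}$. Since $\tilde Y_1$ extends $V_1$, its restriction $\res{\tilde Y_1}{N_1} = V_1$ is irreducible, and because $N_1 \leq J$ any submodule of $Y_1$ restricts to a submodule of $V_1$, forcing $Y_1$ to be irreducible; moreover $J \cap N_1 = N_1$ gives $V_1 = \res{Y_1}{J \cap N_1}$. Assembling the pieces via the projection formula (\autoref{Satz: properties of induction}$ii$) and transitivity of induction yields
\begin{equation*}
V = \ind{\bigl(\tilde Y_1 \otimes \ind{(Y_2 \otimes \cdots \otimes Y_l)}{^{b_1}K_1}\bigr)}{^cG} = \ind{(Y_1 \otimes Y_2 \otimes \cdots \otimes Y_l)}{^cG}.
\end{equation*}

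The main obstacle is not conceptual but requires care: the coclasses and the subgroups have to be tracked through the interplay of restriction, inflation, and induction, so that the coclass of $\tilde Y_1 \otimes \tilde W_1$ on $K_1$ transports correctly to the coclass of $Y_1 \otimes \cdots \otimes Y_l$ on $J$. In particular one needs that the inductive output inside $K_1/N_1$ lifts consistently to $K_1$ via inflation; once this is verified, the combinatorial identifications involving $J \cap N_i$ and the extension property that makes $Y_1$ irreducible handle the rest.
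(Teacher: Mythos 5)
Your proposal is correct and is essentially the paper's own argument: both iterate Clifford's Theorem (\autoref{Satz: Clifford}) along the normal series, passing at each stage to the inertia subgroup of a constituent, restricting the earlier extensions to the shrinking subgroup, and reassembling with transitivity of induction and the projection formula. The only difference is presentational — you package the iteration as a recursion on the length $l$ applied to $\tilde W_1$ over $K_1/N_1$, while the paper runs the same construction as an explicit forward induction on the index $i$.
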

\begin{proof}
We work by induction on $i$, when $i=0$ we take $Y_0=\C$, $W_0=V$, $J_0=G$, $M_0=1$, and $b_0=c$.
Having that $V=\ind{(Y_1\otimes\ldots\otimes Y_i\otimes W_i)}{^cG}$ with $Y_k\in\Irr(J_i/M_{k-1}|c_{k})$ for $k=1,\ldots,i$, and $W_i\in\Irr(J_i/M_i|b_i)$, we define $M_{i+1}=J_i\cap N_{i+1}$, choose an irreducible constituent $V_{i+1}$ of $\res{W_i}{M_{i+1}}$, and define $J_{i+1}=\IG{(J_i)}{b_i}{V_{i+1}}$.
By \autoref{Satz: Clifford}, there are two projective irreducible modules $Y_{i+1}\in\Irr(J_{i+1}/M_{i}|c_{i+1})$ and $W_{i+1}\in\Irr(J_{i+1}/M_{i+1}|b_{i+1})$, for some coclasses $c_{i+1}$ and $b_{i+1}$ satisfying $c_{i+1}\inf{b}{}_{i+1}=b_i$, such that $\res{Y_{i+1}}{M_{i+1}}=V_{i+1}$ and $W_i=\ind{(Y_{i+1}\otimes W_{i+1})}{^{b_i}J_i}$.
By the basic properties of the induction of \autoref{Satz: properties of induction}, we have that
$W=\ind{(\res{(Y_1\otimes\ldots\otimes Y_{i})}{J_{i+1}}\otimes Y_{i+1}\otimes W_{i+1})}{^cG}$, and so we write $Y_k$ in place of $\res{Y_k}{J_{i}}$ for $k=1,\ldots,i-1$.
When we reach $i+1=l$, then we define $J=J_{l-1}$ and  $Y_l=W_{l-1}\in\Irr(J/M_{l-1}|c_{l-1})$, to obtain the decomposition $U=\ind{(Y_1\otimes\ldots\otimes Y_l)}{^cG}$.
\end{proof}
By taking a $\pi$-series of a $\pi$-separable groups we obtain the following.
\begin{thm}\label{Satz: pi-decomposition}
Let $V$ be an irreducible $c$-module of a $\pi$-separable group $G$.
Then $V=\ind{(V_{\pi}\otimes V_{\pi'})}{^cG}$ where $V_{\pi}\in\Irr(J|c_\pi)$ and $V_{\pi'}\in\Irr(J|c_{\pi'})$ for some subgroup $J$ of $G$.
Moreover if $H$ and $H'$ are a Hall $\pi$-subgroup and a Hall $\pi'$-subgroup of $J$, then $\res{V_{\pi}}{H}$ and $\res{V_{\pi'}}{H'}$ are irreducible, whereas $\res{V_{\pi'}}{H}$ and $\res{V_{\pi}}{H'}$ are ordinary modules.
\end{thm}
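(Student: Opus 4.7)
My plan is to induct on the length $l$ of the $\pi$-series of $G$. If $l\leq 1$ then $G$ is itself a $\pi$- or $\pi'$-group, the $\pi$-decomposition of $c$ is trivial, and the statement holds by taking $J=G$ and letting the opposite factor be the one-dimensional trivial module. For the inductive step, let $N=N_1$ be the first nontrivial term of the $\pi$-series, and assume without loss of generality that $N$ is a $\pi$-group. Applying \autoref{Satz: Clifford} at $N$ gives $V=\ind{(Y\otimes W)}{^cG}$, where $Y\in\Irr(J_1\mid\hat{c})$ extends some constituent $V_1\in\Irr(N\mid c)$ of $\res{V}{N}$ to its inertia subgroup $J_1=\IG{G}{c}{V_1}$, and $W\in\Irr(J_1/N\mid b)$ with $\hat{c}\cdot\inf{b}{}=c$. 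Since $\dim Y=\dim V_1$ divides $|N|$ it is a $\pi$-number, and then \autoref{Satz: o(c), dim V and oG} forces $o(\hat{c})$ to be $\pi$ as well. The quotient $J_1/N$ inherits a strictly shorter $\pi$-series, so the inductive hypothesis produces $W=\ind{(W_\pi\otimes W_{\pi'})}{^b J/N}$ for some $N\leq J\leq J_1$, with $W_\pi$ and $W_{\pi'}$ satisfying the analogous conclusions.

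Setting $V_\pi=\res{Y}{J}\otimes W_\pi$ and $V_{\pi'}=W_{\pi'}$ (inflated to $J$), transitivity of induction together with the projection formula of \autoref{Satz: properties of induction} deliver $V=\ind{(V_\pi\otimes V_{\pi'})}{^cG}$. The coclass of $V_\pi$ is the product of $\res{\hat{c}}{J}$ and of the coclass of $W_\pi$, both of $\pi$-order, so $V_\pi$ carries a coclass of $\pi$-order; similarly $V_{\pi'}$ carries one of $\pi'$-order. Since their product equals $\res{c}{J}$, uniqueness of the $\pi$-decomposition in $H^2J$ identifies them with $\res{c_\pi}{J}$ and $\res{c_{\pi'}}{J}$, and the dimension parities follow by the same accounting. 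The ``ordinary module'' statements are immediate from \autoref{Satz: H2Gp and H2H}: every element of $H^2H$ has order dividing $|H|$ (a $\pi$-number) while $c_{\pi'}$ has $\pi'$-order, so $\res{c_{\pi'}}{H}=1$, and symmetrically $\res{c_\pi}{H'}=1$.

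The step I expect to be the main obstacle is the irreducibility of $V_\pi$ itself, since a tensor product of two irreducibles is not in general irreducible, and the analogous claim for $\res{V_\pi}{H}$. The key is that $\res{Y}{J}$ is still irreducible --- any proper submodule would restrict on $N$ to a proper submodule of the irreducible $V_1$, and the dimension count $\dim\res{Y}{J}=\dim V_1$ rules this out. Since $W_\pi$ is inflated from $J/N$, one finds $\res{V_\pi}{N}\cong V_1^{\oplus\dim W_\pi}$. Any decomposition $V_\pi=\bigoplus_jm_jX_j$ into irreducibles then has each $X_j$ restricting on $N$ to a sum of copies of $V_1$ (using $J\leq J_1=\IG{G}{c}{V_1}$), and \autoref{Satz: Clifford} Part II writes $X_j=\res{Y}{J}\otimes W'_j$ with $W'_j$ an irreducible $J/N$-module. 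Since tensoring with $\res{Y}{J}$ is an equivalence on the category of $J$-modules with appropriate coclass, we deduce $W_\pi\cong\bigoplus_jm_jW'_j$, contradicting the inductive irreducibility of $W_\pi$ unless the decomposition is trivial. Running the same Clifford bookkeeping inside $H\supseteq N$ and using the inductively given irreducibility of $\res{W_\pi}{H/N}$ yields the irreducibility of $\res{V_\pi}{H}$; the symmetric statement for $\res{V_{\pi'}}{H'}$ is purely inductive.
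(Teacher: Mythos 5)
Your argument is correct and follows essentially the same route as the paper: an iterated application of \autoref{Satz: Clifford} along the $\pi$-series, with the $\pi$- and $\pi'$-layers collected into the two tensor factors, the coclasses identified by comparing $\pi$-parts, and the restriction statements obtained from the degree and order bounds of \autoref{Satz: o(c), dim V and oG} and \autoref{Satz: H2Gp and H2H}. The only organizational difference is that the paper first establishes the general multi-factor decomposition for an arbitrary normal series (\autoref{Satz: n-decomposition}) and then groups the odd- and even-indexed factors, whereas you fold that recursion into a single induction on the length of the series; the irreducibility of $V_\pi$ that you flag as the main obstacle is exactly the Clifford bijection $W\mapsto Y\otimes W$ of \autoref{Satz: Clifford}, Part II, as you use.
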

\begin{proof}
Let $N_\ast:1=N_0\leq N_1\leq\ldots\leq N_{2l}\leq G$ be a normal series of $G$ such that $\Pi(N_{2i-1}/N_{2i-2})\subseteq\pi$ and $\Pi(N_{2i}/N_{2i-1})\subseteq\pi'$, for instance we can take the $\pi$-series $N_\ast=O_{\pi\ast}(G)$.
By \autoref{Satz: n-decomposition}, we have a subgroup $J$ of $G$ and a decomposition $V=\ind{(Y_1\otimes\ldots\otimes Y_{2l})}{^cG}$  where each $Y_i$ is a projective irreducible $c_i$-module of $J/M_{i-1}$ such that $V_i=\res{Y_i}{M_i}$ is irreducible, for $M_\ast=J\cap N_\ast$.
Let $V_{\pi}=Y_1\otimes Y_3\otimes\ldots\otimes Y_{2l-1}$ and $V_{\pi'}=Y_2\otimes Y_4\otimes\ldots\otimes Y_{2l}$.
Since $\Pi(M_{2i-1}/M_{2i-2})\subseteq\pi$ and $V_{2i-1}$ is irreducible, then $\pi(Y_{2i-1})\subseteq\pi$, and so $\pi(c_{2i-1})\subseteq\pi$.
Similarly $\pi(c_{2i})\subseteq\pi'$.
Since $\res{c}{J}=c_1c_2\cdots c_{2l}$, we have that $c_1c_3\cdots c_{2l-1}=\res{c_{\pi}}{J}$ and $c_2c_4\cdots c_{2l}=\res{c_{\pi'}}{J}$, and thus $V_{\pi}\in\Irr(J|c_{\pi})$ and $V_{\pi'}\in\Irr(J|c_{\pi'})$.
Since $M_{2i-1}/M_{2i-2}$ is a normal $\pi$-subgroup of $J/M_{2i-2}$, then $M_{2i-1}$ is contained in $M_{2i-2}H$.
Since $V_{2i-1}$ is irreducible, we have that $\res{Y_{2i-1}}{H}$ is irreducible, and so $\res{V_{\pi}}{H}$ is irreducible.
Similarly for $\res{V_{\pi'}}{H'}$.
Finally, since $\Pi(H),\Pi(c_{\pi})\subseteq\pi$ and $\Pi(H'),\Pi(c_{\pi'})\subseteq\pi'$, then $\res{V_{\pi'}}{H}$ and $\res{V_{\pi}}{H'}$ are ordinary modules.
\end{proof}
Finally, we relate this result with the Itô--Michler theorem.
\begin{cor}\label{Satz: pi-decomposition degree}
In the decomposition of $V$ given in \autoref{Satz: pi-decomposition}, we have that $\dim(V)_{\pi}=|G:J|_\pi\cdot\dim(V_{\pi})$ and $\dim(V)_{\pi'}=|G:J|_{\pi'}\cdot\dim(X')$.
In particular, $\Pi(V)\subseteq\pi'$ if and only if $H$ is a Hall $\pi$-subgroup of $G$, $c_\pi=1$, $V_{\pi}$ is a linear module, and $\res{U}{H'}=\res{V_{\pi'}}{H'}$ is irreducible for $U=V_{\pi}\otimes V_{\pi'}$.
\end{cor}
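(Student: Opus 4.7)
The plan is to read off the statement as a direct consequence of \autoref{Satz: pi-decomposition} combined with the basic divisibility results of \autoref{Satz: o(c), dim V and oG} and the degree formula for induced modules.

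First I would start from the decomposition
\[\dim V = |G:J|\cdot \dim V_\pi \cdot \dim V_{\pi'},\]
which is immediate from $V=\ind{(V_\pi\otimes V_{\pi'})}{^cG}$ and the definition of induction in \autoref{Satz: properties of induction}. The crucial input is the ``numerical separation'' of the two factors: since $\res{V_\pi}{H}$ is irreducible and $H$ is a Hall $\pi$-subgroup of $J$, \autoref{Satz: o(c), dim V and oG} applied inside $H$ gives that $\dim V_\pi$ divides $|H|$, hence $\dim V_\pi$ is a $\pi$-number. Symmetrically $\dim V_{\pi'}$ is a $\pi'$-number. The two displayed dimension formulas are then obtained simply by taking $\pi$- and $\pi'$-parts of the displayed product (the factor $\dim V_{\pi'}$ contributes nothing to the $\pi$-part, and vice-versa).

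For the ``in particular'' half, I would translate $\Pi(V)\subseteq\pi'$ into $(\dim V)_\pi = 1$, i.e. $|G:J|_\pi \cdot \dim V_\pi = 1$. This splits into two independent conditions: $|G:J|_\pi=1$, which combined with the fact that $H$ is a Hall $\pi$-subgroup of $J$ says exactly that $H$ is a Hall $\pi$-subgroup of $G$; and $\dim V_\pi = 1$, so $V_\pi$ is linear. Applying \autoref{Satz: o(c), dim V and oG} once more, $o(c_\pi)$ divides $\dim V_\pi = 1$, forcing $c_\pi=1$. Conversely, if all four conditions are assumed, reading the dimension formulas backwards gives $(\dim V)_\pi=1$.

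The final clause about $\res{U}{H'}$ is essentially cosmetic: once $V_\pi$ is linear and $c_\pi=1$, the restriction $\res{V_\pi}{H'}$ is a one-dimensional ordinary character $\lh$ of $H'$, so $\res{U}{H'} = \lh\otimes \res{V_{\pi'}}{H'}$ is irreducible iff $\res{V_{\pi'}}{H'}$ is, the latter being guaranteed by \autoref{Satz: pi-decomposition}. The only real obstacle I anticipate is making sense of the equality $\res{U}{H'}=\res{V_{\pi'}}{H'}$ as written: the two sides are literally equal only up to the linear twist $\lh$, so I would either phrase it as an equivalence of projective modules (they have the same cocycle $\res{c_{\pi'}}{H'}$ since $\res{c_\pi}{H'}=1$) or, more safely, simply state that $\res{U}{H'}$ is irreducible and absorb the twist into the discussion.
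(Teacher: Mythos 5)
Your argument is correct and is surely the intended one: the paper states this corollary without any proof, and your route --- take the degree formula $\dim V=|G:J|\cdot\dim V_\pi\cdot\dim V_{\pi'}$ for the induced module, observe via \autoref{Satz: o(c), dim V and oG} applied to $\res{V_\pi}{H}$ and $\res{V_{\pi'}}{H'}$ that $\dim V_\pi$ is a $\pi$-number and $\dim V_{\pi'}$ a $\pi'$-number, and then extract $\pi$- and $\pi'$-parts --- is exactly the natural consequence of \autoref{Satz: pi-decomposition} that the authors leave implicit. (You also silently correct the typo $\dim(X')$ for $\dim(V_{\pi'})$, and your reading of the equality $\res{U}{H'}=\res{V_{\pi'}}{H'}$ as holding only up to a linear twist is the right one.) One step should be tightened: \autoref{Satz: o(c), dim V and oG} applied to the $J$-module $V_\pi$ only gives $o(\res{c_\pi}{J})\mid\dim V_\pi$, hence $\res{c_\pi}{J}=1$, not yet $c_\pi=1$ as a class of $H^2G$. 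Since at that point you already know $H\leq J$ is a Hall $\pi$-subgroup of $G$ and $o(c_\pi)$ is a $\pi$-number, the injectivity of restriction in \autoref{Satz: H2Gp and H2H} upgrades $\res{c_\pi}{H}=1$ to $c_\pi=1$; with that one line added the proof is complete.
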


\section*{Acknowledgements}
This research was conducted during a sabbatical visit of the second author to the Department of Mathematics ``Ulisse Dini'' of the Univerity of Florence, Italy. The authors would like to thank the department, in particular Emanuele Pacifici and Lucia Sanus for their advice in writing this paper.

\vspace{5mm}
\noindent
\textbf{Mariagrazia Bianchi}\\
{Università degli Studi di Milano,\\Dipartimento di Matematica Federigo Enriques,\\ Via Saldini 50, 20123, Milano, Italy\\ mariagrazia.bianchi@unimi.it}

\vspace{5mm}
\noindent
\textbf{Nicola Sambonet}\\
{Universidade Federal da Bahia,\\ Instituto de Matemática e Estatística,\\
Avenida Milton Santos s/n, Campus universitário de Ondina,\\ 40170-110, Salvador, Bahia, Brazil}\\
nsambonet@gmail.com
\end{document}